\theoremstyle{plain}
\newtheorem{thm}{Theorem}
\newtheorem{lem}{Lemma}
\newtheorem{prop}{Proposition}
\theoremstyle{definition}
\newtheorem{defn}{Definition}
\newtheorem{assum}{Assumption}
\theoremstyle{remark}
\newtheorem{rem}{Remark}
\DeclareMathOperator*{\argmax}{argmax}
\DeclareMathOperator*{\argmin}{argmin}
\newcommand{\R}{\mathbb{R}}
\newcommand{\one}{\mathbf{1}}
\newcommand{\onen}{\one_n\one_n^T/n}
\newcommand{\diag}{\mathrm{diag}}
\newcommand{\Dout}{D^{\mathrm{out}}}
\newcommand{\ip}[2]{\langle {#1}, {#2} \rangle}
\DeclareMathOperator{\tr}{Tr}
\newcolumntype{L}[1]{>{\centering\let\newline\\\arraybackslash\hspace{0pt}}m{#1}}
\let\classAND\AND
\let\AND\relax
\let\AND\classAND
\title{\LARGE \bf
Distributed ADMM with linear updates over directed networks*
}
\author{Kiran Rokade$^{1}$ and Rachel Kalpana Kalaimani$^{2}$
\thanks{*This work has been partially supported by DST-INSPIRE Faculty Grant, Department of Science and Technology (DST), Govt. of India (ELE/16-17/333/DSTX/RACH).}
\thanks{$^{1}$Kiran Rokade is with the Department of Electrical and Computer Engineering, Cornell University, Ithaca, NY 14850 USA. 
        {\tt\small kvr36@cornell.edu}}%
\thanks{$^{2}$Rachel Kalpana Kalaimani is with the Department of Electrical Engineering, Indian Institute of Technology Madras, Chennai 600036 India. 
        {\tt\small rachel@ee.iitm.ac.in}}%
}
\begin{document}

\setstcolor{red}

\maketitle
\thispagestyle{empty}
\pagestyle{empty}

\begin{abstract}

We propose a distributed version of the Alternating Direction Method of Multipliers (ADMM) with linear updates for directed networks. We show that if the objective function of the minimization problem is smooth and strongly convex, our distributed ADMM algorithm achieves a geometric rate of convergence to the optimal point. Our algorithm exploits the robustness inherent to ADMM by not enforcing accurate consensus, thereby significantly improving the convergence rate. We illustrate this by numerical examples, where we compare the performance of our algorithm with that of state-of-the-art ADMM methods over directed graphs. 
\end{abstract}


\section{Introduction}

In this paper, we focus on solving an unconstrained optimization problem of the form
\begin{equation}
    \label{eq:opt_prob_intro} \tag{P1}
    \begin{aligned}
        & \min_{x \in \R^m} & & \sum_{i = 1}^n f_i(x),
    \end{aligned}    
\end{equation}
where $x$ is the decision variable. The computations involved in solving the problem are performed by a group of $n$ agents which form a communication network. Due to the restrictions imposed by the network, an agent can communicate only with its neighbours in the network. For each $i$, $f_i: \R^{m} \rightarrow \R$ is a convex function privately known to agent $i$. The goal of each agent is to solve \eqref{eq:opt_prob_intro} in cooperation with its neighbours, without explicitly revealing its function $f_i$. This restriction on the exchange of $f_i$'s could arise due to privacy reasons or due to the communication cost involved in such an exchange. Since each agent minimizes the sum of all functions, it computes a ``globally optimal'' solution without revealing its own function to other agents. 
Such distributed optimization problems are encountered in several applications such as achieving average consensus in a distributed manner, distributed power system control, formation control of robots, statistical learning etc \cite{nedich_survey_dist_opt}. 

Distributed optimization problems have been widely studied in the literature. Existing methods for solving these problems can be broadly classified into two types: gradient-descent-based primal methods \cite{nedich_subgradient_push,nedich_dig,makhdoumi_bal_weights} and Lagrangian-based dual-ascent methods \cite{wei_async_admm,ozdaglar_admm,panda,dcdistadmm}. While most of the methods in the literature assume that the communication network is undirected \cite{wei_async_admm,ozdaglar_admm,panda}, i.e., communication links between agents are bidirectional; in practice, the network is often directed, i.e., the links are unidirectional. This can occur, for example, if two agents have different broadcast ranges. Generalizing distributed optimization algorithms, which work for undirected graphs, to directed graphs is not straightforward, and in the past has required the introduction of some novel concepts. For example, gradient-descent methods were generalized to directed networks by introducing new concepts such as balancing weights \cite{makhdoumi_bal_weights}, push-sum \cite{nedich_subgradient_push}, push-DIGing (Distributed In-exact Gradient tracking) \cite{nedich_dig} etc. On the other hand, there has been less progress in generalizing Lagrangian-based methods to directed graphs. 

A special type of Lagrangian-based dual-ascent method is the Alternating Direction Method of Multipliers (ADMM). Compared to the standard dual-ascent method, ADMM is more robust and amenable to parallel implementation in a multi-agent setting \cite{boyd_admm}. 
While, distributed implementation of ADMM is well-studied for \emph{undirected} graphs \cite{wei_async_admm,ozdaglar_admm}, an extension to \emph{directed} graphs was non-existent in the literature (see \cite{nedich_dig}), until recently. 
In the past few years, to the best of our knowledge, two methods for distributed ADMM over directed graphs have been proposed: DC-DistADMM (Directed Constrained Distributed ADMM) in \cite{dcdistadmm}, and D-ADMM-FTERC (Distributed Alternating Direction Method of Multipliers using Finite-Time Exact Ratio Consensus) in \cite{finite-time_admm}.

The key differences between our method, and DC-DistADMM \cite{dcdistadmm} and D-ADMM-FTERC \cite{finite-time_admm} are as follows.
\begin{itemize}
    \item At each iteration of the respective algorithms, DC-DistADMM requires that, for a fixed $\epsilon > 0$, the agents reach consensus up to an $\epsilon$-accuracy, while D-ADMM-FTERC requires that the agents reach \emph{exact} consensus. 
    As noted earlier, ADMM is known to be robust to errors in implementation \cite{boyd_admm}. For this reason, we do not enforce any accuracy on the consensus step. Instead, we require the agents to communicate with each other $B \geq 1$ times per iteration. The value of $B$ is chosen depending on other parameters of the algorithm. However, it is observed in simulations that even with $B = 1$, the algorithm performs well in the sense that it gives a convergence rate that is better than that of DC-DistADMM and D-ADMM-FTERC. 
    \item Instead of the push-sum and finite-time ratio-consensus methods, we use the method of balancing weights to achieve consensus among the agents. The balancing weights are updated dynamically in a distributed manner. While the operations involved in push-sum and ratio-consensus methods involve divisions, which may cause numerical issues, the balancing weights method involves only addition and multiplication (linear updates), and hence does not suffer from these problems.
    \item Inspired by the proofs in \cite{nedich_dig,panda}, we prove convergence of our method using the \emph{small gain theorem}. This technique is quite different to the ones used in \cite{dcdistadmm} and \cite{finite-time_admm}, and may be of interest independently.
\end{itemize}

Our main contributions to the literature are as follows. 
\begin{itemize}
    \item We propose an ADMM algorithm (Algorithm \ref{alg:main}) which solves the optimization problem \eqref{eq:opt_prob_intro} in a distributed manner over directed networks. At each iteration, our algorithm runs an ``inner loop'' where each agent computes an \emph{approximate} average of the primal variable iterates of all agents in a distributed manner.
    This approximate consensus step exploits the inherent robustness of ADMM, resulting in a superior performance compared to other distributed ADMM algorithms in \cite{dcdistadmm}, \cite{finite-time_admm}.
    \item Under the assumption that each $f_i$ in \eqref{eq:opt_prob_intro} is strongly convex and smooth, we show that the primal-dual iterates of the algorithm converge to their unique optimal points at a geometric rate, provided an explicit bound on the penalty parameter of the ADMM algorithm is satisfied (Theorem \ref{thm:main}). 
    \item Through numerical examples, we demonstrate the superior performance of our algorithm compared to state-of-the-art ADMM methods over directed graphs, in particular \cite{dcdistadmm} and \cite{finite-time_admm}.
    We also demonstrate that our algorithm is robust to changes in its parameters.
\end{itemize}



The rest of the paper is organized as follows. In Section \ref{sec:prob_form}, we propose a problem equivalent to \eqref{eq:opt_prob_intro} that is amenable to distributed implementation. In Section \ref{sec:algorithm}, we present our distributed ADMM algorithm for solving this equivalent problem. In Section \ref{sec:main_res}, we state our main result which guarantees convergence of the algorithm. We also give an outline of the proof of this result. In Section \ref{sec:num_examples}, we present some numerical examples to compare the performance of our algorithm with some other ADMM algorithms over directed graphs. Finally, we give some concluding remarks in Section \ref{sec:conclusion}. Proofs of all the results can be found in the appendix.

\noindent \textbf{Notation:} Let $\log$ denote logarithm with base $10$. $\one_n$ be the vector of all ones in $\R^n$. For a vector $x \in \R^n$, $x_i$ be the $i$th element of $x$, $x^T$ be the transpose of $x$ and let $\|x\|$ denote the $2$-norm of $x$. 

For a matrix $X \in \R^{n \times m}$, let $x_i \in \R^m$ denote its $i$th row. Let $\|X\|$ be its Frobenius norm and $\|X\|_2$ be its induced $2$-norm. Given two matrices $X$ and $Y$, let $\ip{X}{Y} := \tr(X^T Y)$ be their Frobenius inner-product. Given a vector $x \in \R^n$, let $\diag(x) \in \R^{n \times n}$ be the diagonal matrix with elements $x_1, \dots, x_n$ on the diagonal. 

For a function $f : \R^{n \times m} \rightarrow \R$, let $f^* : \R^{n \times m} \rightarrow \R$ denote the convex conjugate of $f$ defined as $f^*(A) = \sup_{X \in \R^{n \times m}} \ip{A}{X} - f(X)$ for all $A \in \R^{n \times m}$ such that the supremum is finite. Let $\nabla f(X) \in \R^{n \times m}$ be the gradient of $f$ at $X$ defined as $[\nabla f(X)]_{ij} = \partial f(X)/\partial X_{ij}$. 

We use the shorthand $\{S^k\}$ to denote the sequence $\{S^k\}_{k \geq 0}$. 

\section{Problem Formulation}
\label{sec:prob_form}

Consider a set of agents denoted by $V = \{1, \dots, n\}$. The communication pattern between the agents is depicted by a directed graph $G = (V,E)$, where $E$ is the set of all directed edges. We denote $(i,j) \in E$ if there exists a directed edge from agent $j$ to agent $i$. Following assumption is standard in the literature when agents desire to reach consensus over a directed graph, e.g., \cite{makhdoumi_bal_weights,panda,nedich_subgradient_push}.
\begin{assum}
    \label{assum:G_strongly_conn}
    $G$ is strongly connected, i.e., there exists a directed path between each pair of agents.
\end{assum}

The agents must cooperatively solve the optimization problem \eqref{eq:opt_prob_intro}, which can be equivalently written as
\begin{equation}
    \label{eq:opt_prob_main} \tag{P2}
    \begin{aligned}
        & \mathrm{min} & & \sum_{i = 1}^n f_i(x_i) \\
        & \mathrm{s.t.} & & x_i = x_j \textrm{ for all } i,j \in V,
    \end{aligned}    
\end{equation}
where $f_i: \R^{m} \rightarrow \R$ is the convex function known only to agent $i$, and $x_i \in \R^{m}$ is the decision variable of agent $i$. The constraint $x_i = x_j \textrm{ for all } i,j \in V$ is called the \emph{consensus constraint}. 
 To write \eqref{eq:opt_prob_main} in a compact form, let $X = \begin{bmatrix} x_1 \ \dots \ x_n \end{bmatrix}^T \in \R^{n \times m}$ be the matrix of all decision variables. Then, the consensus constraint can be written as 
\begin{equation*}
    X = (\onen) X.
\end{equation*}
Further, let $f(X) = \sum_{i = 1}^n f_i(x_i)$. Then, \eqref{eq:opt_prob_main} can be written as
\begin{equation}
    \label{eq:opt_prob_vector_form} \tag{P3}
    \begin{aligned}
        & \mathrm{min}_{X \in \R^{n \times m}} & & f(X) \\
        & \mathrm{s.t.} & & X = (\onen) X.
    \end{aligned}    
\end{equation}
We assume that \eqref{eq:opt_prob_vector_form} is solvable, i.e., there exists an optimal point $X^*$ of \eqref{eq:opt_prob_vector_form}. We make the following assumptions on the function $f$. These assumptions are standard in the literature whenever a geometric rate of convergence is desired, e.g., \cite{nedich_dig,panda,ozdaglar_admm}.
\begin{assum}
    \label{assum:strong_conv_lip_grad}
    For each $i \in \{1,\dots,n\}$ the function $f_i$ is $\mu$-strongly convex, i.e., $\exists \mu > 0$ such that for all $x,y \in \R^m$,
    \begin{equation*}
        f_i(y) \geq f_i(x) + \nabla f_i(x)^T(y_i - x_i) + \frac{\mu}{2}\|y_i - x_i\|^2.
    \end{equation*}
    Further, for each $i \in \{1,\dots,n\}$, $f_i$ is differentiable and $\nabla f_i$ is $L$-Lipschitz, i.e., $\exists L > 0$ such that for all $x,y \in \R^m$,
    \begin{equation*}
        \|\nabla f_i(x) - \nabla f_i(y)\| \leq L \|x_i - y_i\|.
    \end{equation*}
\end{assum}

An immediate consequence of Assumption \ref{assum:strong_conv_lip_grad} is that $f$ is $\mu$-strongly convex and $\nabla f$ is $L$-Lipschitz.

The next two steps ensure that our ADMM algorithm to solve \eqref{eq:opt_prob_vector_form} can be executed in a distributed and parallel manner over the directed graph $G$. First, we introduce a new variable $Y \in \R^{n \times m}$ and write \eqref{eq:opt_prob_vector_form} as
\begin{equation}
    \label{eq:opt_prob_y} \tag{P4}
    \begin{aligned}
        & \mathrm{min}_{X, Y \in \R^{n \times m}} & & f(X) \\
        & \mathrm{s.t.} & & X = Y, \ Y = (\onen) Y.
    \end{aligned}    
\end{equation}
This decouples the decision variables $x_1, \dots, x_n$ of the agents from each other. Second, we move the constraint $Y = (\onen) Y$ into the objective function using an indicator function as follows. Define
\begin{equation*}
    \label{eq:indicator_fn}
    I(Y) = \begin{cases} 0 & \textrm{ if } Y = (\onen) Y, \\ 
    \infty & \textrm{ otherwise}. \end{cases}
\end{equation*}
Now, \eqref{eq:opt_prob_y} is equivalent to
\begin{equation}
    \label{eq:opt_prob_indicator_fn} \tag{P5}
    \begin{aligned}
        & \mathrm{min}_{X, Y \in \R^{n \times m}} & & f(X) + I(Y) \\
        & \mathrm{s.t.} & & X = Y.
    \end{aligned}    
\end{equation}
The formulation above enables us to impose the constraint $Y = (\onen) Y$ explicitly at each step of the ADMM algorithm, as we shall see later. Since $f$ is strongly convex under Assumption \ref{assum:strong_conv_lip_grad}, \eqref{eq:opt_prob_indicator_fn} has a unique optimal point $X^* = Y^*$ which satisfies $(\onen)X^* = Y^*$.

To solve \eqref{eq:opt_prob_indicator_fn} using ADMM, we define the augmented Lagrangian of the problem as
\begin{equation}
    \label{eq:aug_lagrangian}
    L_\rho(X,Y,A) = f(X) + I(Y) + \ip{A}{X-Y} + \frac{\rho}{2}\|X - Y\|^2, 
\end{equation}
where $A \in \R^{n \times m}$ is the dual variable associated with the constraint $X = Y$ and $\rho > 0$ is the \emph{penalty parameter}. The term $(\rho/2)\|X - Y\|^2$ is called the \emph{penalty term}. Let $A^*$ be a dual optimal point of \eqref{eq:opt_prob_indicator_fn}. In the next section, we propose our distributed ADMM algorithm which generates sequences $\{A^k\}$ and $\{X^k\}$ of primal-dual iterates which converge to $A^*$ and $X^*$ respectively, at a geometric rate.

\section{Algorithm}
\label{sec:algorithm}

We begin by writing the standard 2-block ADMM algorithm (see  \cite{boyd_admm}) for \eqref{eq:opt_prob_indicator_fn} as follows. For simplicity, all iterates are initialized at zero. For each $k \geq 0$, 
\begin{align}
    \label{eq:x_update_original}
    X^{k+1} &= \argmin_{X \in \R^{n \times m}} L_\rho(X,Y^{k},A^k), \\
    \label{eq:y_update_original}
    Y^{k+1} &= \argmin_{Y \in \R^{n \times m}} L_\rho(X^{k+1},Y,A^k), \\
    \label{eq:a_update}
    A^{k+1} &= A^k + \rho(X^{k+1} - Y^{k+1}).
\end{align}

We assume that for all $k \geq 0$, each agent $i$ maintains the set of iterates $\{x_i^k,y_i^k,a_i^k\}$. To implement the algorithm above in a distributed manner, each agent must update its iterates using only the information received from its in-neighbours. We analyze each update step above to see if such a distributed implementation is possible. \\
\textbf{The $X$ update step:} Substituting for $L_\rho$ from \eqref{eq:aug_lagrangian} in 
\eqref{eq:x_update_original}, we obtain
\begin{equation}
\label{eq:x_update_explicit}
    X^{k+1} = \argmin_{X \in \R^{n \times m}} f(X) + \ip{A^k}{X} + \frac{\rho}{2}\|X - Y^k\|^2.
\end{equation}
Note that, by definition, $f$ can be decomposed as $f(X) = \sum_{i = 1}^n f_i(x_i)$. Hence, the update step above is equivalent to
\begin{equation}
\label{eq:x_i_update}
    x_i^{k+1} = \argmin_{x_i \in \R^m} f_i(x_i) + (a_i^k)^T x_i + \frac{\rho}{2}\|x_i - y_i^{k}\|^2 
\end{equation}
for all $i \in V$, which can be implemented parallely. 

\textbf{The $Y$ update step:} We can derive an explicit expression for $Y^{k+1}$ in \eqref{eq:y_update_original} as shown by the following result.
\begin{lem}
    \label{lem:y_update}
    The $Y$ update step given in \eqref{eq:y_update_original} is equivalent to
    \begin{equation}
        \label{eq:y_update_exact}
        y_i^{k+1} = \frac{1}{n} \sum_{j = 1}^n \Bigg(x_j^{k+1} + \frac{a_j^k}{\rho}\Bigg) \quad \text{ for all } i \in V.
    \end{equation}
\end{lem}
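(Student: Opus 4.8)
The plan is to compute the minimizer in \eqref{eq:y_update_original} directly by exploiting the structure of the indicator function $I$. First I would substitute the expression for $L_\rho$ from \eqref{eq:aug_lagrangian} into \eqref{eq:y_update_original} and drop the terms that do not depend on $y$, which gives
\begin{equation*}
    y^{k+1} = \argmin_{y} \ I(y) - (a^k)^T y + \frac{\rho}{2}\|x^{k+1} - y\|^2.
\end{equation*}
Since $I(y)$ is $0$ on the consensus subspace $\mathcal{C} = \{y : y = (\onen)y\} = \mathrm{span}(\one)$ and $+\infty$ off it, this is equivalent to the constrained problem of minimizing $-(a^k)^T y + (\rho/2)\|x^{k+1} - y\|^2$ over $y \in \mathcal{C}$. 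Completing the square, the objective (up to a constant) equals $(\rho/2)\|y - (x^{k+1} + a^k/\rho)\|^2$, so $y^{k+1}$ is precisely the Euclidean projection of the point $x^{k+1} + a^k/\rho$ onto the subspace $\mathcal{C}$.

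The remaining step is to identify this projection. Since $\mathcal{C} = \mathrm{span}(\one)$ and the orthogonal projector onto $\mathrm{span}(\one)$ in $\R^n$ is exactly the matrix $\onen = \one\one^T/n$, the projection of any vector $v$ onto $\mathcal{C}$ is $(\onen)v$. Applying this with $v = x^{k+1} + a^k/\rho$ yields $y^{k+1} = (\onen)(x^{k+1} + a^k/\rho)$, which is \eqref{eq:y_update_exact}. One should also note that the minimizer is unique because the quadratic is strictly convex in $y$ and $\mathcal{C}$ is a nonempty closed set, so the $\argmin$ is well-defined.

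I do not anticipate a serious obstacle here; the argument is essentially a projection-onto-a-subspace computation. The only point requiring a little care is making the reduction from the extended-real-valued objective with $I(y)$ to the genuine constrained least-squares problem rigorous — one must observe that any $y \notin \mathcal{C}$ gives objective value $+\infty$ and hence cannot be a minimizer (the objective is finite somewhere on $\mathcal{C}$), so the minimization can be restricted to $\mathcal{C}$ without loss. After that, recognizing $\onen$ as the orthogonal projector onto $\mathrm{span}(\one)$ closes the proof.
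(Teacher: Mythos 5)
Your proposal is correct and follows essentially the same route as the paper: both reduce the $y$-update to the Euclidean projection of $x^{k+1} + a^k/\rho$ onto the consensus subspace by completing the square, and both identify that projection via orthogonality (the paper does this by parameterizing the feasible set as $(\onen)(x^{k+1}+a^k/\rho) + \delta\one$ and showing $\delta^*=0$, while you directly invoke the fact that $\onen$ is the orthogonal projector onto $\mathrm{span}(\one)$). No gaps.
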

The proof of Lemma \ref{lem:y_update} can be found in Appendix \ref{sec:appendix_proof_of_y_update_lemma}. 
Note that \eqref{eq:y_update_exact} involves computing an average for each iteration. We use the idea of balancing weights \cite{makhdoumi_bal_weights}, along with a dynamic average consensus type update rule \cite{dynamic_avg_consensus} to compute an estimate of $y_i^{k+1}$ in a distributed manner. This has been proposed earlier for primal gradient tracking, e.g., \cite{makhdoumi_bal_weights,nedich_dig}. We use these techniques to track the average of the dual variable. 



Our update rule to estimate $y_i^{k+1}$ is as follows. Consider an agent $i \in V$ and an iteration $k \geq 0$. Let the agent's estimate of $y_i^{k+1}$ be denoted by $\zeta_i^{k+1}(\cdot)$. This estimate is updated dynamically. Let the estimate be initialized as $\zeta_i^{k+1}(0) = y_i^k + x_i^{k+1} - x_i^k + (a_i^k - a_i^{k-1})/\rho$.\footnote{We use the notation $a^{-1} = 0$.} The intuition behind this initialization is to perturb $y_i^k$, the last-known estimate of the average, by $x_i^{k+1} - x_i^k + (a_i^k - a_i^{k-1})/\rho$, the change in agent $i$'s contribution to the average. Such an initialization is typical of a dynamic average consensus type update rule \cite{tutorial_dac}. Note that this initialization implicitly assumes that agent $i$ knows the exact value of $y_i^k$, which may not be possible. This assumption can be relaxed with the following observation. From \eqref{eq:a_update}, we know that $x_i^k - y_i^k = (a_i^k - a_i^{k-1})/\rho$. Hence, $\zeta_i^{k+1}(0) = x_i^{k+1}$. Thus, initializing the estimate $\zeta_i^{k+1}(\cdot)$ only requires the knowledge of $x_i^{k+1}$, which the agent has from the update performed in \eqref{eq:x_i_update}. Now, agent $i$ updates its estimate as follows. Consider an integer $B \geq 1$, which must satisfy a lower-bound to be specified later. In each iteration $k \geq 0$, the agents run an ``inner loop'' $B$ number of times where they communicate and update their estimate $\zeta_i^{k+1}(\cdot)$. Specifically, for all $k \geq 0$ and $b \in \{0, \dots, B-1\}$,
\begin{align}
\label{eq:zeta_i_update}
    \zeta_i^{k+1}(b+1) &= \zeta_i^{k+1}(b)\left(1 - d_i^\mathrm{out}w_i^{k+1}(b)\right) \nonumber \\
    & \quad + \sum_{j \in V: (i,j) \in E} w_j^{k+1}(b) \zeta_j^{k+1}(b), 
\end{align}
where $d_i^\mathrm{out}$ is the out-degree of agent $i$, and $w_j^k(b) \in \R$ is the \emph{weight} used by agent $j$ to scale its outgoing information. Thus, agent $i$'s estimate is computed iteratively by taking a weighted average of its own estimate with those of its in-neighbours. To ensure that the agents reach consensus when the underlying graph is directed, the weights must be chosen appropriately. One such set of weights are the ``balancing weights'' of a graph. We next 
describe our update rule used to compute the weights in \eqref{eq:zeta_i_update}, as introduced in \cite{makhdoumi_bal_weights}.

Let $\Dout := \diag(d_1^\mathrm{out}, \dots, d_n^\mathrm{out})$ be the diagonal matrix of out-degrees of the agents. Let $C \in \R^{n \times n}$ be the adjacency matrix of the graph, i.e., $C_{ij} = 1$ if $(i,j) \in E$ and $C_{ij} = 0$ otherwise. Given a vector $w \in \R^n$ of weights, let $W = I - (\Dout - C)\diag(w)$ be the \emph{weight matrix} associated with $w$. 

\begin{defn}{(\emph{Balancing weights})}
Given a graph $G$, a vector $w$ is said to be a vector of balancing weights for $G$ if the weight matrix $W = I - (\Dout - C)\diag(w)$ associated with $w$ is doubly-stochastic, i.e., $W\one = \one, \one^T W = \one^T$.
\end{defn}


We use the same update rule as described in \cite{makhdoumi_bal_weights} to compute the set of weights required in \eqref{eq:zeta_i_update}.
Let $d_*^\mathrm{out} := \max_{i \in V} d_i^\mathrm{out}$ be the maximum out-degree of the graph and $D$ be the diameter of the graph. Following \cite{makhdoumi_bal_weights}, we initialize the weights as $w_i^1(0) \leq (1/d_*^\mathrm{out})^{2D+1}$. Then, for all $k \geq 0$ and $b \in \{0, \dots, B-1\}$,
\begin{align}
\label{eq:w_i_update}
    w_i^{k+1}(b+1) = \frac{1}{2} \Bigg(w_i^{k+1}(b) + \frac{1}{d_i^\mathrm{out}} \sum_{j \in V: (i,j) \in E} w_j^{k+1}(b)\Bigg)
\end{align}
and $w_i^{k+2}(0) = w_i^{k+1}(B)$. Thus, agent $i$'s weight is updated by taking a weighted average of its own weight with those of its in-neighbours. Hence, \eqref{eq:w_i_update} can be implemented in a distributed manner. Note that for each iterate $b \in \{0, \dots, B-1\}$, the update rule above is implemented sequentially after implementing \eqref{eq:zeta_i_update}.

The update rule in \eqref{eq:w_i_update} can be written compactly as
\begin{align}
\label{eq:w_update}
    w^{k+1}(b+1) = P w^{k+1}(b)
\end{align}
and $w^{k+2}(0) = w^{k+1}(B)$, where $P := (I + (\Dout)^{-1}C)/2$. Further, the update rule in \eqref{eq:zeta_i_update} can be written compactly as
\begin{align}
\label{eq:zeta_update}
    \zeta^{k+1}(b+1) = W^{k+1}(b) \zeta^{k+1}(b)
\end{align}
where $\zeta^{k+1}(B) = \begin{bmatrix} \zeta^k_1(B) \ \dots \ \zeta^k_n(B) \end{bmatrix}^T$ and $W^{k+1}(b) := I - (\Dout - C)\diag(w^{k+1}(b))$ is the weight matrix associated with the weights $w^{k+1}(b)$. It is known that the weights updated according to \eqref{eq:w_update} converge to the set of balancing weights of the graph \cite{makhdoumi_bal_weights}. 
Additionally, we require that the weight matrix $W^k(b)$ satisfies certain properties. These properties are stated in our next result.

\begin{lem}
\label{lem:W_properties}
Let $W^k(b) = I - (\Dout - C)\diag(w^k(b))$ be the weight matrix associated with the weights $w^k(b)$ updated as per \eqref{eq:w_update}. The matrix has the following properties.
\begin{enumerate}
    \item For all $k \geq 1$, for all $b \in \{0, \dots, B-1\}$, $\exists p^{k}(b) \in \R^n$ such that $p^{k}(b)^T \one = 1$, $\one^T W^{k}(b) = \one^T$ and $W^{k}(b) p^{k}(b) = p^{k}(b)$.  
    \item $\exists \bar{k} \geq 0, \exists \delta \in [0,1)$ such that $\|W^k(b) - p^k(b) \one^T\| \leq \delta$ for all $k \geq \bar{k}$, for all $b \in \{0, \dots, B-1\}$. 
    \item For all $b \in \{0, \dots, B-1\}$, $\lim_{k \rightarrow \infty} p^k(b) = \one/n$.
    \item $\exists \hat{k} \geq 0, \exists M \geq 0$ such that $\|W^k(b)\| \leq M$ for all $k \geq \hat{k}$, for all $b \in \{0, \dots, B-1\}$. 
\end{enumerate}
\end{lem}

The proof of Lemma \ref{lem:W_properties} can be found in Appendix \ref{sec:appendix_proof_of_W_lemma}. In essence, Lemma \ref{lem:W_properties} states that $W^k(b)$ is left-stochastic, while its right eigenvector associated with the eigenvalue $1$ converges to $\one$ as $k \rightarrow \infty$. Further, for large $k$, $W^k(b)$ has a bounded norm and its second-largest eigenvalue is strictly inside the unit circle.

Finally, we look at the distributed implementation of the dual update step.

\textbf{The $A$ update step:} From \eqref{eq:a_update}, we have $a_i^{k+1} = a_i^k + \rho(x_i^{k+1} - y_i^{k+1})$. Thus, each agent $i \in V$ can compute $a_i^{k+1}$ independently using its set of iterates $\{x_i^{k+1},y_i^{k+1}\}$. 

\begin{rem}
    The agents need to communicate with their neighbours only during the $Y$ update step. 
\end{rem}

The steps implemented by each agent $i \in V$ are summarized in Algorithm \ref{alg:main}. 
\begin{algorithm}
\caption{Distributed ADMM at agent $i \in V$}
\label{alg:main}
\begin{algorithmic}
    \STATE Initialize $x_i^0 = y_i^0 = a_i^0 = 0, w_i^1(0) \leq (1/d_*^\mathrm{out})^{2D+1}$.
    \FOR{$k = 0, 1, \dots$}
	\item Compute $x_i^{k+1}$ using \eqref{eq:x_i_update}.
    \item Initialize $\zeta_i^{k+1}(0) = x_i^{k+1}$
    \FOR{$b = 0, \dots, B-1$}
    \item Send $w_i^{k+1}(b)$ and $\zeta_i^{k+1}(b)$ to out-neighbours.
    \item Compute $\zeta_i^{k+1}(b+1)$ using \eqref{eq:zeta_i_update}.
    \item Compute $w_i^{k+1}(b+1)$ using \eqref{eq:w_i_update}.
    \ENDFOR
    \item Fix $y_i^{k+1} = \zeta_i^{k+1}(B)$.
    \item Fix $w_i^{k+2}(0) = w_i^{k+1}(B)$.
    \item Compute $a_i^{k+1} = a_i^k + \rho(x_i^{k+1} - y_i^{k+1})$.
    \ENDFOR
\end{algorithmic}
\end{algorithm}

\begin{rem}
    In Algorithm \ref{alg:main}, for the ease of notation, we relabel the final estimate $\zeta_i^{k+1}(B)$ of $y_i^{k+1}$ to $y_i^{k+1}$ itself. Thus, henceforth, $Y^k$ refers to the \emph{estimate} $\zeta^k(B)$ of $Y^k$. This is not to be confused with the \emph{exact} value of $Y^k$ defined by \eqref{eq:y_update_exact}, which will not be referred to in the rest of the paper.
\end{rem}


\section{Main Result}
\label{sec:main_res}

In this section, we state our main result and then give an outline of its proof. The details of the proof can be found in the appendix. 

\begin{thm}
    \label{thm:main}
    Given a problem of the form \eqref{eq:opt_prob_indicator_fn}, suppose that the parameters of Algorithm \ref{alg:main} are chosen such that the number of communication rounds per iteration, $B$, satisfies 
        \begin{align}
        \label{eq:B_condition}
        B \geq \max\bigg\{1,\bigg\lceil \frac{\log (\gamma_1\gamma_3)}{\log (1/\delta)} \bigg\rceil\bigg\},
    \end{align}
    the convergence rate $\lambda \in (0,1)$ satisfies 
    \begin{align}
            \label{eq:lambda_condition_1}
        \frac{L}{\mu}\left(\frac{1}{\lambda^2} -1\right) < \left(\frac{\lambda}{1+\lambda}\right) \min\left\{\frac{1}{2},\hat{c}_3\right\}
    \end{align}
    and the penalty parameter $\rho$ satisfies
    \begin{align}
        \label{eq:rho_condition_1}
        \rho \in \left(L\left(\frac{1}{\lambda^2} -1\right), \left(\frac{\lambda\mu}{1+\lambda}\right) \min\left\{\frac{1}{2},\hat{c}_3\right\}\right),
    \end{align}
    where
    \begin{align*}
        \gamma_1 &:= 1/\mu, \\
        \gamma_3 &:= \hat{c}_1 + \rho \left(1 + \frac{2\hat{c}_1}{\mu}\right) \left(\frac{\mu(1+\lambda)}{\lambda \mu - 2 \rho (1 + \lambda)}\right), \\
        \hat{c}_1 &:= \frac{1}{\hat{c}_2}\Bigg(\sqrt{\rho(L+\beta)} + \\
        & \quad \sqrt{\rho^3\Big(\frac{1}{\beta} + \frac{1}{\mu}\Big) } \left(\frac{\mu(1 + \lambda)}{\lambda \mu - 2 \rho (1 + \lambda)}\right) \Bigg), \\
        \hat{c}_2 &:= \sqrt{1 + \frac{\rho}{L} - \frac{1}{\lambda^2}} - \sqrt{\rho^3\Big(\frac{1}{\beta} + \frac{1}{\mu}\Big)} \left(\frac{2(1 + \lambda)}{\lambda \mu - 2 \rho (1 + \lambda)}\right), \\
        \hat{c}_3 &:= \frac{\lambda\mu}{4\left(\lambda\mu + L(1+\lambda)\left(1/\beta + 1/\mu\right)\right)}, \\
    \end{align*}
    $\beta > 0$ is arbitrary, $\delta \in [0,1)$, $M \geq 0$ are as defined in Lemma \ref{lem:W_properties}, and $L$ and $\mu$ are as defined in Assumption \ref{assum:strong_conv_lip_grad}. Then, the iterates generated by Algorithm \ref{alg:main} satisfy
    \begin{align*}
        \|X^k - X^*\| \leq c_x \lambda^k, \quad \|A^k - A^*\| \leq c_a \lambda^k
    \end{align*}
    for some non-negative constants $c_a, c_x$, where $(X^*,A^*)$ is the unique primal-dual optimal point of \eqref{eq:opt_prob_indicator_fn}.
\end{thm}

\begin{rem}
    \label{rem:lambda_existence}
    It is easy to check that a (large-enough) $\lambda \in (0,1)$ which satisfies \eqref{eq:lambda_condition_1} always exists. As $\lambda \rightarrow 1$, the LHS of \eqref{eq:lambda_condition_1} converges to zero, while the RHS converges to a positive number. This implies that a $\rho > 0$ satisfying \eqref{eq:rho_condition_1} always exists.
\end{rem}
\begin{rem}
    If $f$ is $\mu$-strongly convex and $L$-smooth (as is the case due to Assumption \ref{assum:strong_conv_lip_grad}), then $\kappa = L/\mu \geq 1$ is called the \emph{condition number} of $f$.
    We can argue that the convergence of Algorithm \ref{alg:main} is faster if the problem is well-conditioned, i.e., $\kappa$ is small. To see this, note that  with a decrease in $\kappa = L/\mu$, the LHS of \eqref{eq:lambda_condition_1} decreases, while the RHS increases (since $\hat{c}_3$ increases). Thus, smaller values of $\kappa \geq 1$ give a wider range of values of the convergence rate $\lambda$ that satisfy \eqref{eq:lambda_condition_1}.
\end{rem}

The proof of Theorem \ref{thm:main} uses the small gain theorem. This proof is inspired by the ideas in \cite{nedich_dig} and \cite{panda}.  We briefly recall the theorem after introducing some notation. Given a sequence $\{S^k\}$ of matrices in $\R^{n \times m}$ and a convergence rate $\lambda \in (0,1)$, let
\begin{align}
    \label{eq:defn_lambda_norm}
    \|S\|^{\lambda,K} = \max_{k = 0, \dots, K} \frac{\|S^k\|}{\lambda^k}, \quad \|S\|^{\lambda} = \sup_{k \geq 0} \frac{\|S^k\|}{\lambda^k}
\end{align}
for all $K \geq 0$. Further, given two sequences $\{S_1^k\},\{S_2^k\}$ and a constant $\gamma \geq 0$, we use the notation $S_1 \xrightarrow{\gamma} S_2$ to denote $
    \|S_2\|^{\lambda,K} \leq \gamma \|S_1\|^{\lambda,K} + \omega$
for all $K \geq 0$, for some constant $\omega \geq 0$. 
Now, the small gain theorem can be stated as follows.
\begin{prop}{\cite[Theorem 3.7]{nedich_dig} (Small gain theorem)}
    \label{prop:small_gain_thm}
    Given $m$ sequences $\{S_1^k\}, \dots, \{S_m^k\}$ of vectors in $\R^n$, suppose there exist non-negative constants $\gamma_1, \dots, \gamma_m$ and a convergence rate $\lambda \in (0,1)$ satisfying the cycle of relations
    \begin{equation*}
        S_1 \xrightarrow{\gamma_1} S_2 \xrightarrow{\gamma_2} S_3 \ \dots \ S_{m} \xrightarrow{\gamma_m} S_1
    \end{equation*}
    such that $\gamma_1 \dots \gamma_m < 1$. Then, there exist constants $c_j \geq 0$ such that $\|S_j\|^\lambda \leq c_j$ for all $j \in \{1, \dots, m\}$.
\end{prop}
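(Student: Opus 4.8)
The plan is to unwind the cycle of relations into a single self-referential inequality for one of the sequences, exploit that the truncated norms $||s_j||^{\lambda,K}$ are finite for every fixed $K$, and then pass to the supremum. First I would make the $m$ cyclic bounds explicit: by the definition in \eqref{eq:arrow_definition}, each arrow $s_j \xrightarrow{\gamma_j} s_{j+1}$ (with indices read cyclically, so that $s_{m+1} = s_1$) supplies a constant $\omega_j \geq 0$ with $||s_{j+1}||^{\lambda,K} \leq \gamma_j ||s_j||^{\lambda,K} + \omega_j$ for all $K \geq 0$.

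Next, I would fix one index, say $j = 1$, and substitute these inequalities into one another going once around the loop. Telescoping the $m$ bounds yields
\[
    ||s_1||^{\lambda,K} \leq (\gamma_1 \cdots \gamma_m)\, ||s_1||^{\lambda,K} + \Omega,
\]
where $\Omega \geq 0$ is a fixed constant assembled from the $\omega_j$'s and partial products of the $\gamma_j$'s; crucially, $\Omega$ does not depend on $K$.

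The decisive step is the rearrangement. For each fixed $K$, the quantity $||s_1||^{\lambda,K} = \max_{k = 0, \dots, K} ||s_1^k||/\lambda^k$ is a maximum over finitely many terms and hence a finite real number. Setting $\gamma := \gamma_1 \cdots \gamma_m < 1$, I may therefore legitimately subtract $\gamma\,||s_1||^{\lambda,K}$ from both sides to obtain $||s_1||^{\lambda,K} \leq \Omega/(1 - \gamma)$, a bound that is uniform in $K$. Since $K \mapsto ||s_1||^{\lambda,K}$ is nondecreasing with supremum $||s_1||^\lambda$, this uniform bound passes to the limit, giving $||s_1||^\lambda \leq \Omega/(1 - \gamma) =: c_1$. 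Repeating the argument with the chain started at each index in turn --- the product of the bounding constants around the loop is always $\gamma < 1$ regardless of the starting point --- produces a finite constant $c_j$ for every $j$, which is the claim.

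The main obstacle, and the reason the truncated norms are introduced at all, is the self-reference. The tempting one-line argument ``$||s_1||^\lambda \leq \gamma\,||s_1||^\lambda + \Omega$ implies $||s_1||^\lambda \leq \Omega/(1 - \gamma)$'' is invalid when $||s_1||^\lambda = +\infty$, since $\infty \leq \gamma\infty + \Omega$ conveys no information. Carrying out the entire telescoping and rearrangement at the level of the finite quantities $||s_j||^{\lambda,K}$, and only passing to the supremum once a $K$-independent bound is secured, is exactly the device that makes the step rigorous; the sole bookkeeping point requiring care is verifying that $\Omega$ is genuinely independent of $K$.
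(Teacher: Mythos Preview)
The paper does not give its own proof of this proposition: it is quoted verbatim as \cite[Theorem 3.7]{nedich_dig} and simply invoked as a black box in the proof outline of Theorem~\ref{thm:main}. There is therefore no in-paper argument to compare against.

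Your argument is correct and is in fact the standard proof of this small gain lemma (and essentially the one given in the cited reference). You identify precisely the point that matters: the self-referential inequality $||s_1||^{\lambda,K} \leq \gamma\,||s_1||^{\lambda,K} + \Omega$ can only be solved for $||s_1||^{\lambda,K}$ because the truncated norm is a priori finite for each fixed $K$, and the resulting bound is uniform in $K$ because $\Omega$ is built from the $\omega_j$'s and partial products of the $\gamma_j$'s, none of which depend on $K$. Once the bound on $||s_1||^\lambda$ is in hand, you can either repeat the telescoping from each starting index as you suggest, or simply feed the bound forward along the chain $s_1 \xrightarrow{\gamma_1} s_2 \xrightarrow{\gamma_2} \cdots$ to obtain the remaining $c_j$'s; both work.
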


Note that, if the conditions of the small gain theorem are satisfied, then $S_j^k \rightarrow 0$ at a geometric rate of $\lambda \in (0,1)$ for all $j \in \{1, \dots, m\}$. Our main result is stated below. 



We use the small gain theorem to prove Theorem \ref{thm:main} as follows. Let 
\begin{align*}
    \tilde{A}^k := A^k - A^*, \ \tilde{X}^k := X^k - X^*
\end{align*}
To show that $A^k \rightarrow A^*$ and $X^k \rightarrow X^*$ with a geometric rate of $\lambda$, it is enough to show that $\|\tilde{A}\|^\lambda$ and $\|\tilde{X}\|^\lambda$ are bounded. To show this, we first prove that there exist non-negative numbers $\gamma_1,\gamma_2,\gamma_3$ such that the cycle of relations
\begin{equation}
    \label{eq:arrows_desired}
    \tilde{A} + \rho \Delta Y \xrightarrow{\gamma_1} X_\perp \xrightarrow{\gamma_2} Y_\perp \xrightarrow{\gamma_3} \tilde{A} + \rho \Delta Y
\end{equation}
holds with $\gamma_1 \gamma_2 \gamma_3 < 1$, where $\Delta Y^k := Y^k - Y^{k-1}, X_\perp^k := X^k - (\onen)X^k, Y_\perp^k := Y^k - (\onen)Y^k$ for all $k \geq 0$.
The proof of each arrow in \eqref{eq:arrows_desired} is provided in Appendix \ref{sec:appendix_arrows}. In particular, we show that the conditions given in Theorem \ref{thm:main} are sufficient to prove the cycle of relations in \eqref{eq:arrows_desired} such that $\gamma_1 \gamma_2 \gamma_3 < 1$. Then, by the small gain theorem (Proposition \ref{prop:small_gain_thm}), it follows that $\|Y_\perp\|^\lambda$ and $\|\tilde{A} + \rho \Delta Y\|^\lambda$ are bounded.

After proving that $\|Y_\perp\|^\lambda$ and $\|\tilde{A} + \rho \Delta Y\|^\lambda$ are bounded, it is shown in Appendix \ref{sec:appendix_add_ineq} that, for some constants $\gamma_a \geq 0,\gamma_x \geq 0,\omega_a, \omega_x$, 
\begin{equation}
    \label{eq:additional_inequalities}
    \begin{aligned}
        \|\tilde{A}\|^\lambda &\leq \gamma_a \|Y_\perp\|^\lambda + \omega_a, \\ \|\tilde{X}\|^\lambda &\leq \gamma_x \|\tilde{A} + \rho \Delta Y\|^\lambda + \omega_x. 
    \end{aligned}
\end{equation}
Then, it follows that $\|\tilde{A}\|^\lambda \leq c_a$ and $\|\tilde{X}\|^\lambda \leq c_x$ for some non-negative constants $c_a$ and $c_x$. From the definition of $\|S\|^\lambda$ given in \eqref{eq:defn_lambda_norm}, this implies $\|\tilde{A}^k\| \leq c_a \lambda^k$ and  $\|\tilde{X}^k\| \leq c_x \lambda^k$
for all $k \geq 0$, which completes the proof of Theorem \ref{thm:main}.


\section{Numerical Examples}
\label{sec:num_examples}

The observations noted in this section were made across many examples. For simplicity, we present the results from one such example.\footnote{The datasets generated during the current study are available from the corresponding author on request.}

\subsection{Sensor network}

\textbf{Problem setup:} We consider a sensor network of $n = 50$ agents. Each agent is placed in the unit square uniformly at random, and it is assigned a random broadcast range between $0.2$ and $0.4$. This generates a directed graph. 
We consider the problem of sensor fusion, where the goal of all the agents is to estimate a common parameter using data from the neighbouring agents. The local objective function of agent $i$ is $f_i(x) = (1/2) \|H_i x - g_i\|^2$, where the measurement matrix $H_i \in \R^{10 \times 2}$ and the data matrix $g_i \in \R^2$ are generated randomly from a standard normal distribution. To estimate the parameter, the agents must solve problem \eqref{eq:opt_prob_intro} in a distributed manner. 

\textbf{Algorithms chosen for comparison:} 
We consider DC-DistADMM \cite{dcdistadmm} and D-ADMM-FTERC \cite{finite-time_admm}. 
Note that the bounds on the parameters of each of these algorithms, such as the ones on $\rho$ and $B$ given in Theorem \ref{thm:main}, are often very conservative. Hence, we tune the parameters to get the best convergence rate, as suggested in \cite{nedich_dig} .

\textbf{Convergence rate:} A plot of the normalized primal residual obtained by each of the algorithms mentioned above is shown in Fig. \ref{fig:basic_comparison} (left). It can be observed that the performance of Algorithm \ref{alg:main} is better than that of D-ADMM-FTERC and DC-DistADMM in this example. 

\begin{figure}
    \centering
    {\includegraphics[width = 0.49\linewidth]{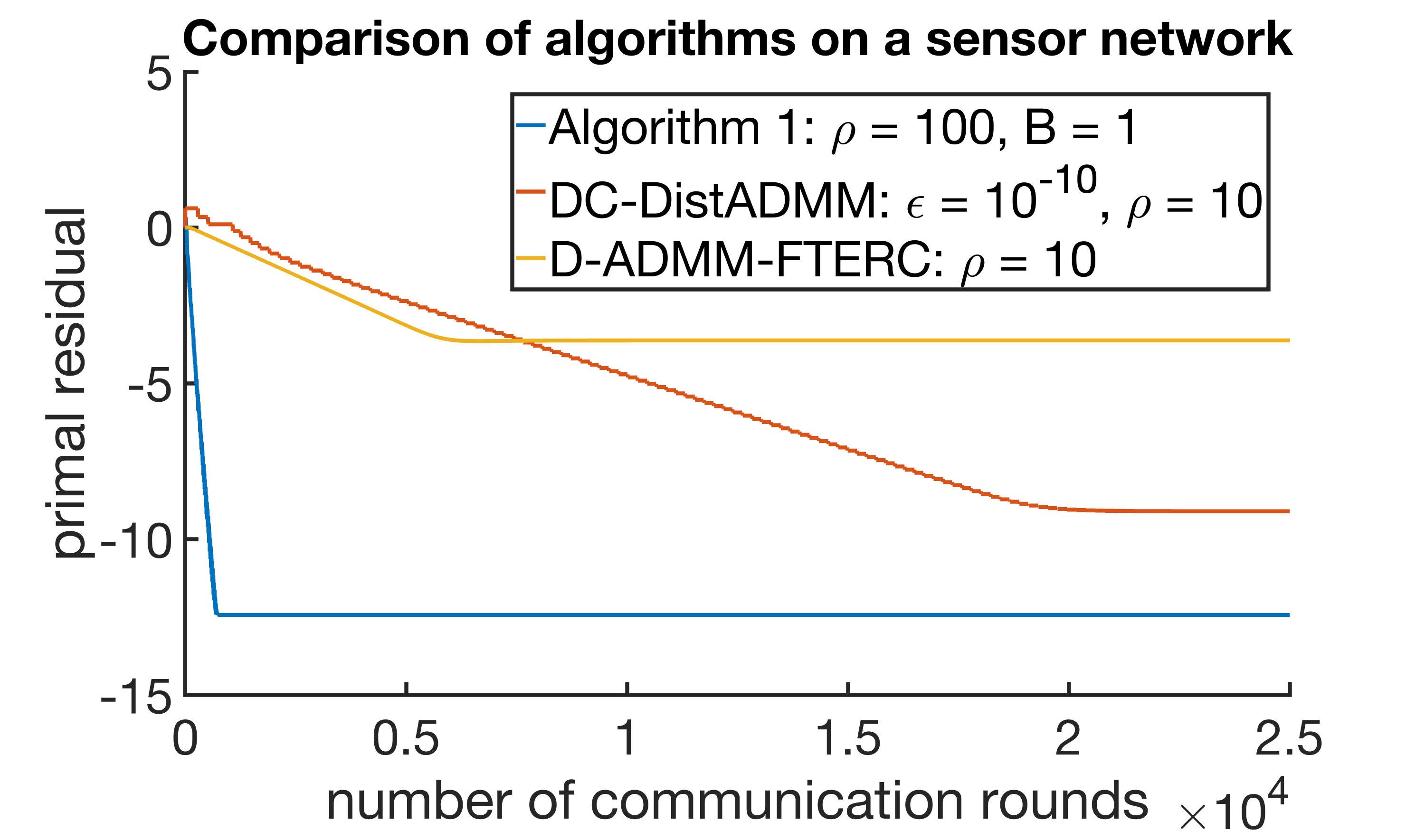}}
    {\includegraphics[width = 0.49\linewidth]{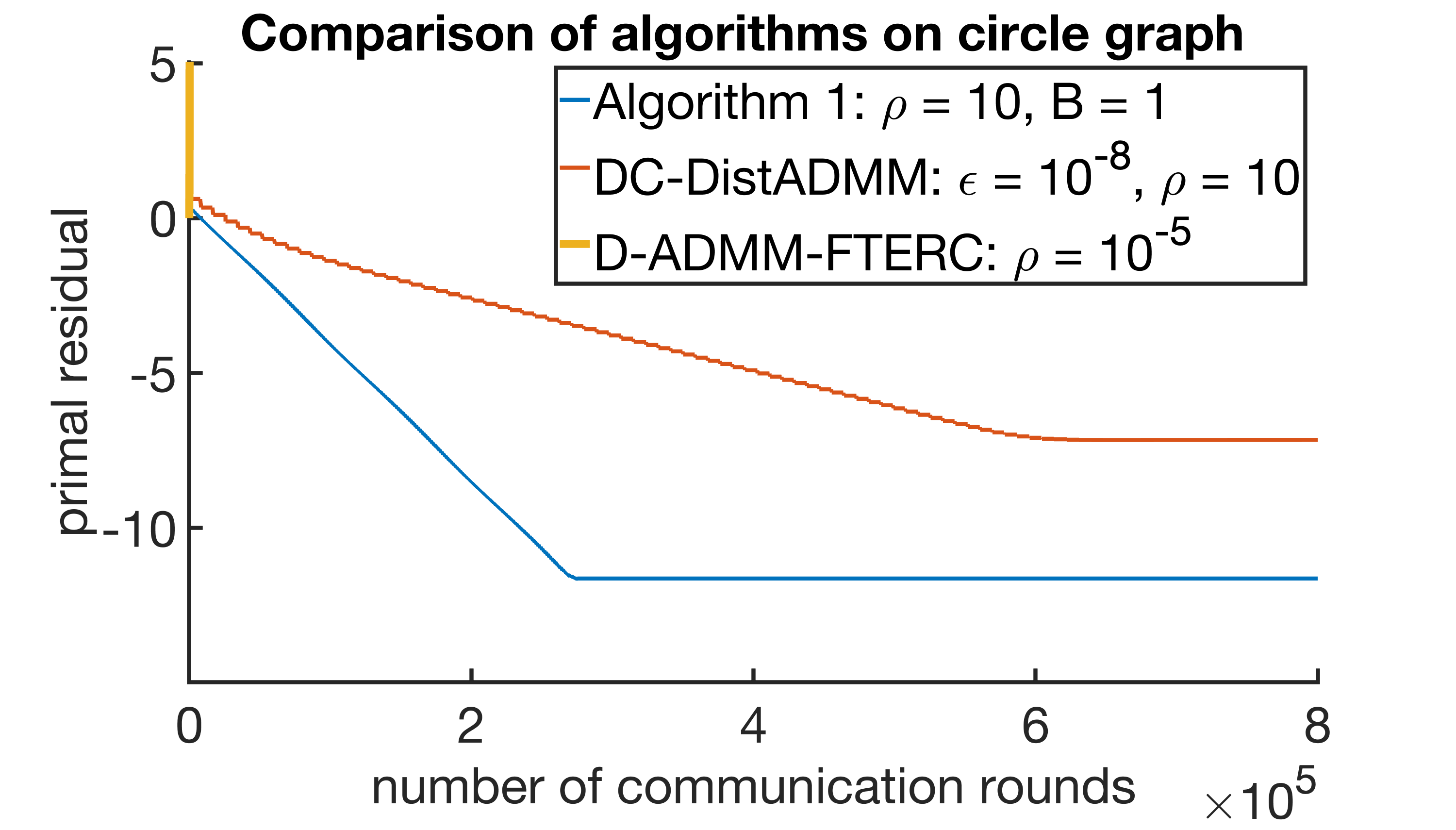}}
    \caption{Plot of $\log(\|X^k - X^*\|/\|X^0 - X^*\|)$ vs. the number of communication rounds for various algorithms on a randomly generated sensor network of $n = 50$ nodes (left), and on a directed circle graph of $n = 50$ nodes (right).}
    \label{fig:basic_comparison}
\end{figure}

It is worth noting that the algorithms mentioned above also generate a sequence $\{A^k\}$ of dual variable iterates which converge to the dual optimal point $A^*$ (not shown here due to space constraints). This is not the case with gradient-based algorithms.

\textbf{Communication cost:} For each algorithm, the number of values sent by an agent per communication round is $O(m)$, where $m$ is the dimension of the unknown parameter.

\subsection{Graphs with large diameters}

Most of the graphs observed in practice are sparse and hence have large diameters. It was observed that Algorithm \ref{alg:main} performs much better than DC-DistADMM and D-ADMM-FTERC on graphs with large diameters, such as a directed circle graph and an undirected line graph. The residuals generated by each algorithm on a directed circle graph of $n = 50$ agents are shown in Fig. \ref{fig:basic_comparison} (right). Note that the residual generated by D-ADMM-FTERC did not converge to zero despite tuning the value of the step size to $10^{-5}$. The intuition behind the superior performance of Algorithm \ref{alg:main} is as follows. It is known that the error in consensus after $k$ steps is upper-bounded by $\eta(W)^k$, where $\eta(W)$ is the second-smallest eigenvalue of the weight matrix $W$ used to achieve consensus \cite{boyd_optimal_W}. Further, $\eta(W)$ has a lower-bound that is inversely proportional to the diameter of the graph \cite{mohar_eigenvalues}. Thus, graphs with larger diameters take more iterations to achieve consensus. Algorithm \ref{alg:main} does not require any accuracy on the inner consensus loop, while the other algorithms require either exact consensus or $\epsilon$-consensus at each step. ADMM appears to be robust to errors in consensus. Hence, enforcing consensus accuracy at each iteration may be an overkill.


\subsection{Sensitivity of Algorithm \ref{alg:main} to its parameters}

\begin{figure}
    \centering
    {\includegraphics[width=0.49\linewidth]{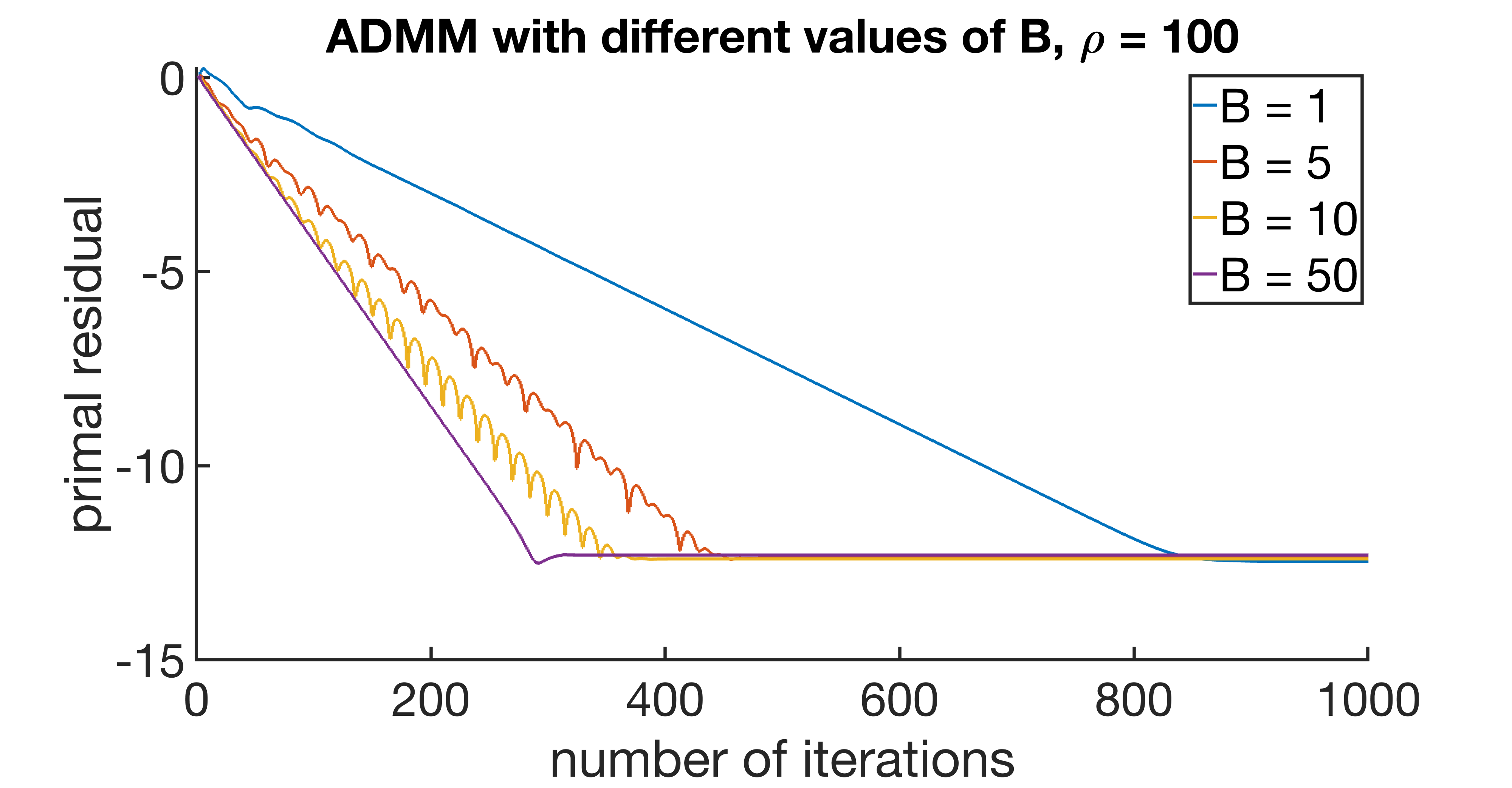}}
    {\includegraphics[width=0.49\linewidth]{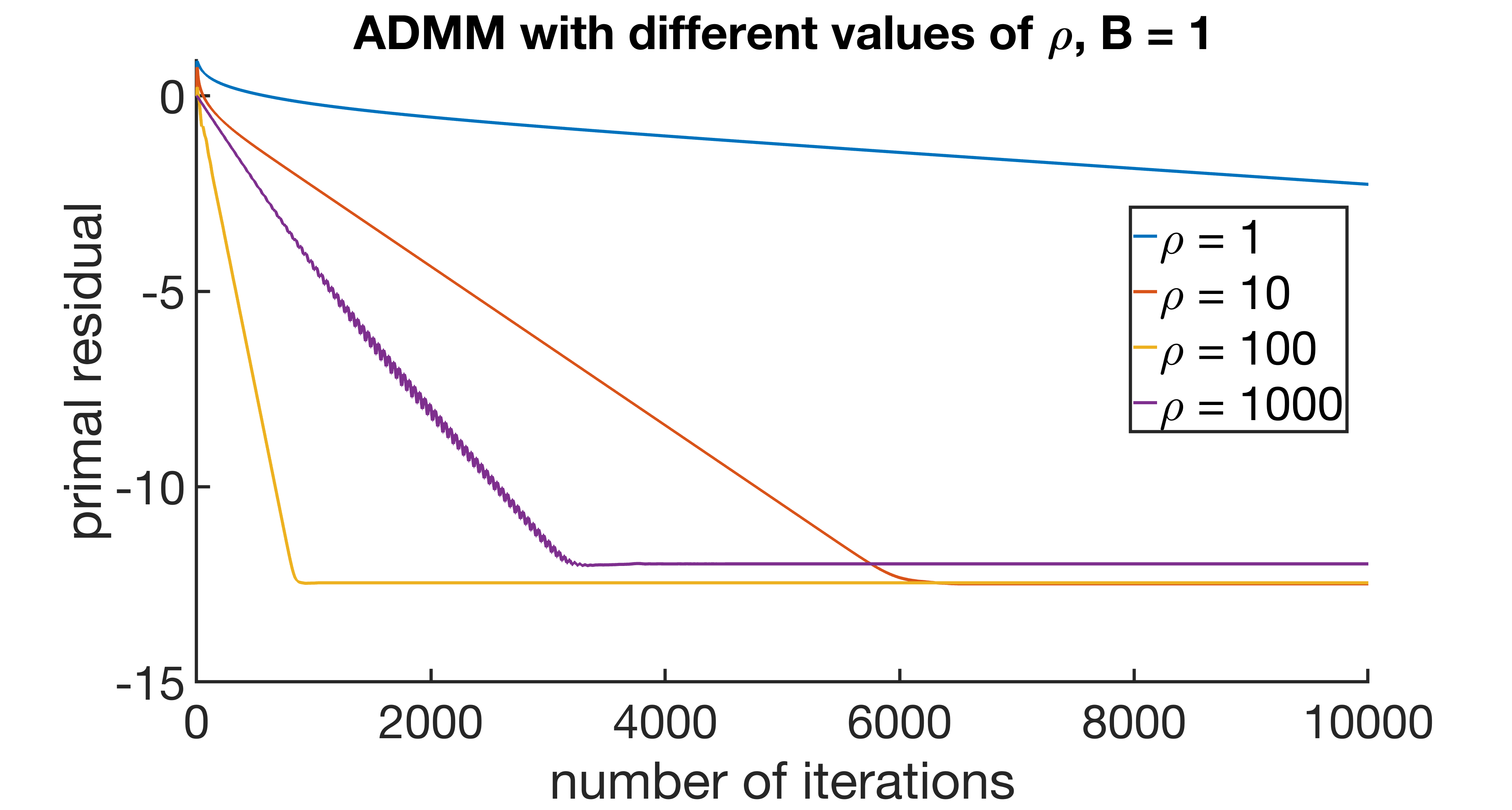}}
    \caption{Plot of $\log(\|X^k - X^*\|/\|X^0 - X^*\|)$ vs. the number of iterations for Algorithm \ref{alg:main} with $\rho = 100$ and various values of $B$ (left), and with $B = 1$ and various values of $\rho$ (right).}
    \label{fig:admm_various_Bs_rhos}
\end{figure}



\textbf{Effect of $B$:} We study the effect of changing the number of communication rounds per iteration ($B$) in the inner loop of our ADMM algorithm. As observed in Fig. \ref{fig:admm_various_Bs_rhos} (left), with an increase in $B$, the number of iterations required to reach the optimal value decreases, while there is no change in the final value.


\textbf{Robustness to changes in $\rho$:} 
We observe in  Fig. \ref{fig:admm_various_Bs_rhos} (right), that the primal residual decreases monotonically for a large range of values of $\rho$, albeit with different rates. On the other hand, primal-descent algorithms are known to be sensitive to their step-size. 

\section{Conclusion}
\label{sec:conclusion}

We proposed an ADMM algorithm to solve distributed optimization problems over directed graphs. Our algorithm uses the ideas of balancing weights and dynamic average consensus. Under the assumption that the objective function is strongly convex and smooth, we showed that the primal-dual iterates of the algorithm converge to their unique optimal points at a geometric rate, provided the parameters of the algorithm are chosen appropriately. 
Through a numerical example, we demonstrated that 
our algorithm gives a better performance than some state-of-the-art ADMM methods over directed graphs. Additionally, the algorithm was observed to be robust to changes in its parameters. In the future, it will be interesting to see if convergence of the algorithm can be guaranteed by relaxing the assumptions of strong convexity and smoothness. Further, it will be interesting to extend the algorithm to time-varying graphs.




\bibliographystyle{plain}        
\bibliography{references}           

\section*{APPENDIX}

\section{Proof of Lemma \ref{lem:y_update}}
\label{sec:appendix_proof_of_y_update_lemma}

The update step in \eqref{eq:y_update_original} can be written as
\begin{equation}
    \label{eq:y_update_distance_form}
    Y^{k+1} = \argmin_{Y = (\onen)Y} \|Y - (X^{k+1} + A^k/\rho)\|^2.
\end{equation}
We parameterize the constraint set of \eqref{eq:y_update_distance_form}, i.e., $\mathcal{C} = \{Y \in \R^{n \times m}: Y = (\onen)Y\}$, using the parameter $\nu = (\nu_1,\dots,\nu_m) \in \R^m$ as $\mathcal{C} = \{Y(\nu) \in \R^{n \times m}: Y(\nu) = (\onen)(X^{k+1} + A^k/\rho) + (\one_n \one_m^T/n)\diag(\nu)\}$. With this parameterization, let $Y(\nu^*)$ be the minimizer of \eqref{eq:y_update_distance_form}. We show that $\nu^* = 0$ as follows. By definition,


\begin{align*}
    Y^{k+1} &= Y(\nu^*) \\
    &= \argmin_{Y(\nu) = (\onen)(X^{k+1} + A^k/\rho) + (\one_n \one_m^T/n)\diag(\nu)} \\
    &\hspace{4cm}\|Y(\nu) - (X^{k+1} + A^k/\rho)\|^2.
\end{align*}
This implies
\begin{align*}
    \nu^* &= \argmin_{\nu \in \R^m} \|(I - \onen)(X^{k+1} + A^k/\rho) \\ 
    &\hspace{4cm}- (\one_n \one_m^T/n)\diag(\nu)\|^2 \\
    &= \argmin_{\nu \in \R^m} \|(I - \onen)(X^{k+1} + A^k/\rho)\|^2 + n\|\nu\|^2 \\
    &= 0,
\end{align*}
where we have used the fact that the matrices $(I - \onen)(X^{k+1} + A^k/\rho)$ and $(\one_n \one_m^T/n)\diag(\nu)$ are orthogonal. Thus, we have $Y^{k+1} = Y(\nu^*) = Y(0) = (\onen)(X^{k+1} + A^k/\rho)$, i.e., $y_i^{k+1} = (1/n) \sum_{j = 1}^n (x_j^{k+1} + a_j^k/\rho)$ for all $i \in V$.

\section{Proof of Lemma \ref{lem:W_properties}}
\label{sec:appendix_proof_of_W_lemma}

(1) The statement follows from the fact that, by definition, $W^k(b) = I - (\Dout - A)\diag(w^k(b))$ is column stochastic. 

(2) It is shown in \cite[Lemma 1]{makhdoumi_bal_weights} that $\lim_{k \rightarrow \infty} P^k$ exists. Hence, $\lim_{k \rightarrow \infty} w^k(b) = P^{b + (k-1)(B-1)}w^1(0)$ exists and is independent of $b$. This implies $\lim_{k \rightarrow \infty} W^k(b)$ exists and is independent of $b$. Let $W^\infty := \lim_{k \rightarrow \infty} W^k(b)$. Now, by the triangle inequality,
\begin{align}
\label{eq:Wkb_triangle_ineq}
    &\|W^k(b) - p^k(b) \one^T\| \nonumber \\
    &\leq \|W^k(b) - W^\infty\|
    + \|W^\infty - \one \one^T/n\| \nonumber \\
    & \quad + \|\one \one^T/n - p^k(b) \one^T\|.
\end{align}
We analyze each term in the RHS of \eqref{eq:Wkb_triangle_ineq} one-by-one. 

(a) $\|W^k(b) - W^\infty \|$: By definition of $W^\infty$, $\|W^k(b) - W^\infty\| \rightarrow 0$ as $k \rightarrow \infty$. 

(b) $\|W^\infty - \one \one^T/n\|$: 
It is shown in \cite[Lemma 1]{makhdoumi_bal_weights} that $W^\infty$ is a doubly-stochastic matrix. Further, it is shown in \cite[Lemma 2]{makhdoumi_bal_weights} that $W^\infty$ is a non-negative matrix with $[W^\infty]_{ii} > 0$ for all $i \in V$ and $[W^\infty]_{ij} > 0$ if and only if $(i,j) \in E$. Using these facts, we first show that $W^\infty (W^\infty)^T$ is a primitive matrix. To show this, it is enough to show that $\big(W^\infty (W^\infty)^T\big)^D$ is a positive matrix, where $D$ is the diameter of $G$. For any $(i,j) \in V \times V$,
\begin{align*}
    &\big[(W^\infty)^D\big]_{ij} \\
    &= \sum_{l_1 \in V} \sum_{l_2 \in V} \dots \sum_{l_D \in V} [W^\infty]_{il_1} [W^\infty]_{l_1l_2} \dots [W^\infty]_{l_D j}.
\end{align*}
By definition, $D$ is the length of the largest path connecting any two nodes. Moreover, $G$ is strongly connected. Hence, any given $i,j \in V$, there exists a path $(q_1,i), (q_2,q_1), \dots, (j,q_D)$ from $i$ to $j$. Hence, $$\left[(W^\infty)^D\right]_{ij} \geq [W^\infty]_{iq_1} [W^\infty]_{q_1q_2} \dots [W^\infty]_{q_D j} > 0.$$ Thus, $(W^\infty)^D$ is a positive matrix. Now, $[W^\infty (W^\infty)^T]_{ij} \geq [W^\infty]_{ii} [W^\infty]_{ji} > 0$. Thus, $\left(W^\infty (W^\infty)^T\right)^D$ is also a positive matrix. It then follows that $W^\infty (W^\infty)^T$ is a primitive matrix. This implies $\rho\big(W^\infty (W^\infty)^T - \one \one^T/n\big) < 1$ \cite[(8.3.16)]{meyer}. Now,
\begin{align*}
    &\|W^\infty - \one \one^T/n\|^2 \\
    &= \rho\big((W^\infty - \one \one^T/n)(W^\infty - \one \one^T/n)^T\big) \\
    &= \rho\big(W^\infty (W^\infty)^T - \one \one^T/n\big) \\
    &< 1,
\end{align*}
where we have used the facts $W^\infty \one = \one, \one^T W^\infty = \one^T$ as shown in \cite[Lemma 1]{makhdoumi_bal_weights}. Thus, $\|W^\infty - \one \one^T/n\| < 1$.

(c) $\|\one \one^T/n - p^k(b) \one^T\|$: Consider any $\epsilon > 0$. We show that $\exists k_1 \geq 0$ such that $\|\one \one^T/n - p^k(b) \one^T\| < \epsilon$ for all $k \geq k_1$. For all $k \geq 0$ and $l \geq 0$,
\begin{align}
\label{eq:one_minus_p^k(b)_triangle_ineq}
    &\|\one \one^T/n - p^k(b) \one^T\| \nonumber \\
    &\leq \|\one \one^T/n - (W^\infty)^l\| + \|(W^\infty)^l - \big(W^k(b)\big)^l\| \nonumber \\ 
    & \quad + \|\big(W^k(b)\big)^l - p^k(b) \one^T\|.
\end{align}
We argued above that $W^\infty$ is a primitive matrix with $\one^T W^\infty = \one^T$ and $W^\infty \one = \one$. Hence, $\rho(W^\infty - \one \one^T/n) < 1$. It follows that $\exists l_1 \geq 0$ such that $\|(W^\infty)^l - \one \one^T /n\| \leq \epsilon/3$ for all $l \geq l_1$ \cite[(8.3.10)]{meyer}. Following the same arguments as with $W^\infty$ above, we can show that $\rho\big(W^k(b) - p^k(b) \one^T\big) < 1$ for all $k \geq 0$ and $0 \leq b \leq B-1$. Hence, for all $k \geq 0$, $\exists l_2 \geq 0$ such that $\|\big(W^k(b)\big)^l - p^k(b) \one^T\| \leq \epsilon/3$ for all $l \geq l_2$. Moreover, since $\lim_{k \rightarrow \infty} W^k(b) = W^\infty$, the eigenvalues of $W^k(b)$ converge to those of $W^\infty$. This implies $l_2$ is independent of $k$ (see \cite[(8.3.10)]{meyer}, which implies that the rate of convergence of $\big(W^k(b)\big)^l$ to $p^k(b) \one^T$ depends only on the second largest eigenvalue $W^k(b)$). Now, let $l_0 := \max\{l_1,l_2\}$. Note that given any matrix $C \in \R^{n \times n}$, $C^{l_0}$ is a continuous function of the entries of $C$. Hence, $\lim_{k \rightarrow \infty} W^k(b) = W^\infty$ implies $\exists k_1 \geq 0$ such that $\|(W^\infty)^{l_0} - \big(W^k(b)\big)^{l_0}\| \leq \epsilon/3$ for all $k \geq k_1$. Substituting the aforementioned facts in \eqref{eq:one_minus_p^k(b)_triangle_ineq}, we have $\|\one \one^T/n - p^k(b) \one^T\| \leq \epsilon$ for all $k \geq k_1$. This implies $\|\one \one^T/n - p^k(b) \one^T\| \rightarrow 0$ as $k \rightarrow \infty$.

Using the results derived for each of the three terms above, from \eqref{eq:Wkb_triangle_ineq} we can conclude that $\|W^k(b) - p^k(b) \one^T\| < 1$ for $k$ large-enough. 

(3) The statement follows from the fact that $\|\one \one^T/n - p^k(b) \one^T\| \rightarrow 0$ as $k \rightarrow \infty$, which we have proved above. 

(4) For all $k \geq 1$, $\|W^k(b)\| \leq \|W^k(b) - p^k(b)\one^T\| + \|p^k(b)\one^T\|$. Now, choosing $\hat{k} \geq \bar{k}$ implies $\|W^k(b) - p^k(b)\one^T\| \leq \delta$ for all $k \geq \hat{k}$. On the other hand, $\|p^k(b)\one^T\|$ is bounded since $p^k(b)$ converges to $\one/n$. Thus, $\|W^k(b)\| \leq M$ for all $k \geq \hat{k}$ for some $M \geq 0$.

\section{Some intermediate results}

We state some useful intermediate results. 

\begin{lem}
\label{lem:square_root_sequences_inequality}
    Let $\{p^k\}$, $\{q_i^k\}, i \in \{1, \dots, m\}$, be sequences in $\R$ such that $q_i^k \geq 0$ for all $i,k$. Suppose $(p^k)^2 \leq \sum_{i = 1}^m (q_i^k)^2 + c$
    for some constant $c \geq 0$, for all $k$. Then, $p^k \leq \sum_{i = 1}^m q_i^k + \sqrt{c}$ for all $k$.
\end{lem}
\begin{proof}
Since $\{q_i^k\}$ is a non-negative sequence for all $i \in \{1, \dots, m\}$ and $c \geq 0$, we have
\begin{align*}
    \left(\sum_{i = 1}^m q_i^k + \sqrt{c}\right)^2 \geq \sum_{i = 1}^m (q_i^k)^2 + c
\end{align*}
for all $k$. Now, using $(p^k)^2 \leq \sum_{i = 1}^m (q_i^k)^2 + c$, we obtain
\begin{align*}
    \left(\sum_{i = 1}^m q_i^k + \sqrt{c}\right)^2 \geq \sum_{i = 1}^m (q_i^k)^2 + c \geq (p^k)^2.
\end{align*}
\end{proof}

Let $f^*$ denote the conjugate of $f$, i.e., $f^*(a) = \sup_X \ip{A}{X} - f(X)$ for all $a \in \R^n$ such that the supremum is finite.
\begin{lem}
    \label{lem:dual_opt_point}
    The dual optimal point $A^*$ of \eqref{eq:opt_prob_indicator_fn} satisfies $(\onen)A^* = 0$.
\end{lem}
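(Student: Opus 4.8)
**Proof proposal for Lemma 3 ($(\onen)a^* = 0$).**

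The plan is to exploit the optimality (KKT) conditions of problem \eqref{eq:opt_prob_indicator_fn} together with the structure of the indicator function $I$. At a primal-dual optimal point $(x^*, y^*, a^*)$, stationarity of the augmented Lagrangian (equivalently, of the ordinary Lagrangian $f(x) + I(y) + a^T(x-y)$ since at optimality $x^* = y^*$ makes the penalty term and its gradient vanish) with respect to $y$ gives the inclusion $a^* \in \partial I(y^*)$. So the first step is to compute $\partial I(y^*)$. Since $I$ is the indicator function of the subspace $S = \{y : y = (\onen)y\} = \mathrm{range}(\onen)$, which is exactly $\mathrm{span}(\one)$, its subdifferential at any point of $S$ is the normal cone to $S$, namely $S^\perp = \{v : \one^T v = 0\} = \{v : (\onen)v = 0\}$. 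Hence $a^* \in S^\perp$, which is precisely the claim $(\onen)a^* = 0$.

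First I would make the optimality condition precise: write the Lagrangian $L(x,y,a) = f(x) + I(y) + a^T(x-y)$, and note that $(x^*,y^*,a^*)$ being a saddle point (equivalently, satisfying the KKT conditions for this convex problem, for which strong duality holds because $f$ is strongly convex and the constraint is a single affine equality) forces $0 \in \partial_y L(x^*,y^*,a^*) = \partial I(y^*) - a^*$, i.e. $a^* \in \partial I(y^*)$. Then I would invoke the standard fact that for $S$ a linear subspace, $\partial I_S(y) = S^\perp$ for every $y \in S$, and identify $S = \mathrm{span}(\one)$ so that $S^\perp = \{v \in \R^n : \one^T v = 0\}$. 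Since $(\onen)v = \one(\one^T v)/n$, membership in $S^\perp$ is equivalent to $(\onen)v = 0$, giving $(\onen)a^* = 0$.

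An alternative, more hands-on route that avoids subdifferential calculus: use the dual-update fixed point. At optimality the ADMM iteration is stationary, so $a^{k+1} = a^k$ forces $x^* = y^*$, and the $y$-update \eqref{eq:y_update_exact} (Lemma \ref{lem:y_update}) at the fixed point reads $y^* = (\onen)(x^* + a^*/\rho) = (\onen)x^* + (\onen)a^*/\rho$. Since $x^* = y^*$ is itself in $\mathrm{span}(\one)$, we have $(\onen)x^* = x^*$, so the equation collapses to $x^* = x^* + (\onen)a^*/\rho$, whence $(\onen)a^* = 0$. Either argument is short; I would present the subdifferential one as primary and perhaps remark on the fixed-point one.

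I do not anticipate a serious obstacle here — the only point requiring a little care is justifying that the augmented-Lagrangian penalty term contributes nothing to the $y$-stationarity at optimality (because $x^* - y^* = 0$ there, so $\nabla_y \tfrac{\rho}{2}\|x-y\|^2 = -\rho(x^*-y^*) = 0$), and that strong duality legitimately yields a saddle point of the (un-augmented) Lagrangian. Both are standard for convex problems with affine equality constraints, and the problem at hand is a textbook instance, so the proof should be a couple of lines.
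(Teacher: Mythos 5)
Your argument is correct, but it takes a genuinely different route from the paper's. You obtain $(\onen)a^* = 0$ from the KKT stationarity condition in $y$, namely $a^* \in \partial I(y^*)$, combined with the fact that the subdifferential of the indicator of the subspace $S = \mathrm{span}(\one) = \{y : y = (\onen)y\}$ at any point of $S$ is the normal cone $S^\perp = \{v : \one^T v = 0\} = \{v : (\onen)v = 0\}$. The paper instead computes the dual function explicitly: $\inf_{x,y} L_0(x,y,a) = -f^*(-a) + \inf_{y = (\onen)y}(-a^T y)$, and the inner infimum equals $0$ when $\one^T a = 0$ and $-\infty$ otherwise, so the dual problem collapses to $\sup_{(\onen)a = 0} -f^*(-a)$ and any dual maximizer satisfies the claim automatically. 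Both proofs turn on the same linear-algebra fact (the orthogonal complement of $\mathrm{span}(\one)$ is the kernel of $\onen$); the practical difference is that the paper's version needs only that $a^*$ maximizes the dual function (a point outside $\{a : (\onen)a = 0\}$ yields dual value $-\infty$ and so cannot be optimal), whereas yours additionally invokes strong duality and the existence of a saddle point to justify the stationarity inclusion --- valid here because the constraint is affine and the problem is assumed solvable, but one extra standard fact to cite. Your fixed-point alternative is a reasonable sanity check, but note that it tacitly uses the \emph{exact} $y$-update of Lemma \ref{lem:y_update} rather than the algorithm's inexact update $y^{k+1} = W^B x^{k+1}$, and it presupposes the standard correspondence between fixed points of exact ADMM and KKT points, so it should remain a remark rather than the primary argument.
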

\begin{proof}
Given the standard Lagrangian $L_0$ of \eqref{eq:opt_prob_indicator_fn} (equation \eqref{eq:aug_lagrangian} with $\rho = 0$), the dual problem of \eqref{eq:opt_prob_indicator_fn} is given by
\begin{align*}
    &\sup_A \inf_{(X,Y)} L_0(X,Y,A) \\
    &=\sup_A \Big(\inf_X (f(X) + \ip{A}{X}) + \inf_Y (I(Y) - \ip{A}{Y}) \Big) \\
    &=\sup_A \Big(-f^*(-A) + \inf_{Y = (\onen)Y} -\ip{A}{Y}\Big) \\
    &=\sup_{(\onen)A = 0} -f^*(-A).
\end{align*}
Thus, the dual optimal point satisfies $(\onen)A^* = 0$.
\end{proof}

\begin{prop}{\cite[Theorem 6]{kakade_duality}}
    \label{prop:f^*_properties}
    Under Assumption \ref{assum:strong_conv_lip_grad}, $f^*$ is $1/L$-strongly convex and $1/\mu$-smooth.
\end{prop}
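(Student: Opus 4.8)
The plan is to prove the two assertions separately, exploiting the classical duality that exchanges the roles of strong convexity and Lipschitz gradients. As a preliminary I would first observe that Assumption \ref{assn:strong_conv_lip_grad} makes $\nabla f : \R^n \to \R^n$ a bijection. Injectivity is immediate from adding the $\mu$-strong-convexity inequalities obtained by expanding $f(y)$ around $x$ and $f(x)$ around $y$, which gives $(\nabla f(x) - \nabla f(y))^T(x - y) \geq \mu\|x - y\|^2$. Surjectivity follows because, for any fixed $a \in \R^n$, the function $x \mapsto f(x) - a^T x$ is again $\mu$-strongly convex and therefore coercive, hence attains a minimum at some $x$ where $\nabla f(x) = a$. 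Consequently $f^*$ is finite and differentiable on all of $\R^n$, with $\nabla f^* = (\nabla f)^{-1}$ and $f^*(a) = a^T\nabla f^*(a) - f(\nabla f^*(a))$; this is what lets me reparametrize everything below in terms of primal points.

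\emph{Step 1 (Lipschitz gradient of $f^*$).} Fix $a, b \in \R^n$ and set $x = \nabla f^*(a)$, $y = \nabla f^*(b)$, so that $\nabla f(x) = a$ and $\nabla f(y) = b$. The monotonicity estimate recalled above reads
\begin{equation*}
    (a - b)^T(x - y) \geq \mu\|x - y\|^2 ,
\end{equation*}
and the Cauchy--Schwarz inequality then gives $\|x - y\| \leq (1/\mu)\|a - b\|$, i.e. $\nabla f^*$ is $1/\mu$-Lipschitz.

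\emph{Step 2 (strong convexity of $f^*$).} With the same notation, the goal is $f^*(b) \geq f^*(a) + \nabla f^*(a)^T(b - a) + \frac{1}{2L}\|b - a\|^2$. Substituting the Fenchel identities for $f^*(a)$ and $f^*(b)$, using $\nabla f^*(a) = x$ and $b = \nabla f(y)$, and rearranging, this inequality is equivalent to
\begin{equation*}
    f(x) - f(y) - \nabla f(y)^T(x - y) \geq \frac{1}{2L}\|\nabla f(x) - \nabla f(y)\|^2 .
\end{equation*}
This last inequality is the standard consequence of $\nabla f$ being $L$-Lipschitz: for fixed $y$ the function $\phi(z) = f(z) - \nabla f(y)^T z$ is convex with $L$-Lipschitz gradient and is globally minimized at $z = y$ (since $\nabla\phi(y) = 0$); applying the descent lemma $\phi(z) \leq \phi(x) + \nabla\phi(x)^T(z - x) + \frac{L}{2}\|z - x\|^2$ and minimizing the right-hand side over $z$ (the minimizer being $z = x - \frac{1}{L}\nabla\phi(x)$) gives $\phi(y) \leq \phi(x) - \frac{1}{2L}\|\nabla\phi(x)\|^2$, which is precisely the displayed inequality once $\phi$ and $\nabla\phi$ are written back in terms of $f$.

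The algebraic rearrangements in Step 2 are routine, so I do not anticipate a real obstacle; the only points requiring care are (i) the preliminary fact that $\mu$-strong convexity forces $\nabla f$ to be a bijection, which is what guarantees smoothness of $f^*$ and legitimizes writing $x = \nabla f^*(a)$, and (ii) keeping track of the domain of $f^*$ --- but since $f$ is finite-valued and $\mu$-strongly convex on all of $\R^n$, its conjugate is finite-valued on all of $\R^n$ too, so no subtleties with extended-real-valued conjugates or subdifferentials enter.
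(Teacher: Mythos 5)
Your proof is correct. Note that the paper itself offers no proof of this proposition --- it is imported verbatim from \cite[Theorem 6]{kakade_duality}, where the result is established in greater generality (for arbitrary dual norm pairs, via Fenchel duality between strong convexity and strong smoothness). What you have written is a correct, self-contained derivation of the Euclidean special case that the paper actually uses. Your route --- first showing $\nabla f$ is a bijection so that $\nabla f^* = (\nabla f)^{-1}$, then getting the $1/\mu$-Lipschitz bound from strong monotonicity plus Cauchy--Schwarz, and getting $1/L$-strong convexity of $f^*$ by reducing it through the Fenchel identities to the co-coercivity-type inequality $f(x) - f(y) - \nabla f(y)^T(x-y) \geq \frac{1}{2L}\|\nabla f(x)-\nabla f(y)\|^2$ --- is the standard direct argument and is more elementary than the conjugate-duality machinery of the cited reference, at the price of being tied to the Euclidean norm (which is all this paper needs). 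The two points you flag as needing care are handled adequately: coercivity of $x \mapsto f(x) - a^Tx$ gives surjectivity, and differentiability of $f^*$ with $\nabla f^*(a) = (\nabla f)^{-1}(a)$ follows since $(\nabla f)^{-1}(a)$ is a subgradient of $f^*$ at $a$ (by the Fenchel--Young equality case) and is continuous in $a$ by your Step 1. No gaps.
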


\begin{lem}
    \label{lem:grad_f^*_and_x_relation}
    The primal-dual iterates of Algorithm \ref{alg:main} satisfy $X^{k+1} = \nabla f^*(-A^{k+1}-\rho \Delta Y^{k+1})$ for all $k \geq 0$ and the primal-dual optimal points of \eqref{eq:opt_prob_indicator_fn} satisfy $X^* = \nabla f^*(-A^*)$.
\end{lem}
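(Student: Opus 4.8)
The plan is to read both identities off first-order optimality conditions and then invert the gradient map of $f$ using its Fenchel conjugate.

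\textbf{Deriving the iterate identity.} Since the $x$-update \eqref{eq:x_update_final} is the unconstrained minimization of a differentiable strongly convex function, its first-order optimality condition reads
\[
\nabla f(x^{k+1}) + a^k + \rho(x^{k+1} - y^k) = 0.
\]
I would then eliminate $a^k$ using the dual update \eqref{eq:a_update_final}, which rearranges to $a^k = a^{k+1} - \rho(x^{k+1} - y^{k+1})$. Substituting this in and cancelling the two $\rho x^{k+1}$ terms leaves
\[
\nabla f(x^{k+1}) = -a^{k+1} - \rho(y^{k+1} - y^k) = -a^{k+1} - \rho\,\Delta y^{k+1}.
\]

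\textbf{Inverting the gradient.} Under Assumption \ref{assn:strong_conv_lip_grad}, $f$ is a closed, proper, strictly convex function with full domain, so by Proposition \ref{prop:f^*_properties} $f^*$ is differentiable and, by a standard convex-analysis fact, $\nabla f$ is a bijection of $\R^n$ onto itself whose inverse is $\nabla f^*$; equivalently, $\nabla f(x) = a$ if and only if $x = \nabla f^*(a)$. Applying this to the identity above gives $x^{k+1} = \nabla f^*(-a^{k+1} - \rho\,\Delta y^{k+1})$ for all $k \geq 0$. For the optimal point, the same equivalence applied to the KKT stationarity condition of \eqref{eq:opt_prob_indicator_fn} in the $x$-variable, namely $\nabla f(x^*) + a^* = 0$ (which is well-posed because the unique primal optimum $x^*$ forces $a^* = -\nabla f(x^*)$, consistent with Lemma \ref{lem:dual_opt_point}), yields $x^* = \nabla f^*(-a^*)$.

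\textbf{Main obstacle.} The algebraic manipulation is entirely routine; the only point requiring care is the equivalence $\nabla f(x) = a \iff x = \nabla f^*(a)$, i.e.\ that $\nabla f^*$ is a genuine everywhere-defined, single-valued inverse of $\nabla f$. This is precisely where $\mu$-strong convexity together with the $L$-Lipschitz gradient is used (strong convexity gives injectivity and coercivity, hence surjectivity, of $\nabla f$), and I would invoke it through Proposition \ref{prop:f^*_properties} and the Fenchel--Young identity rather than re-deriving it here.
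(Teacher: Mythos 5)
Your proposal is correct and follows essentially the same route as the paper: both derive $\nabla f(x^{k+1}) + a^{k+1} + \rho\,\Delta y^{k+1} = 0$ from the first-order condition of the $x$-update combined with the dual update, and both obtain the optimal-point identity from stationarity of the Lagrangian at $(x^*,a^*)$. The only cosmetic difference is that the paper converts these stationarity conditions via the characterization $\nabla f^*(-a) = \argmin_x\, a^T x + f(x)$, whereas you invoke the equivalent fact that $\nabla f^*$ is the inverse of $\nabla f$.
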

\begin{proof}
From the definition of $f^*$, we have
\begin{align*}
    f^*(-A) = \sup_{x} -\ip{A}{X} - f(X).
\end{align*}
This implies
\begin{align}
    \label{eq:grad_f^*_defn}
    \nabla f^*(-A) &= \argmax_X -\ip{A}{X} - f(X) \nonumber \\
    &= \argmin_X \ip{A}{X} + f(X).
\end{align}
Now, from the update step \eqref{eq:x_update_explicit} of Algorithm \ref{alg:main}, we have, for all $k \geq 0$,
\begin{align*}
    &\nabla f(X^{k+1}) + A^k + \rho(X^{k+1} - Y^k) = 0.
\end{align*}
Using \eqref{eq:a_update}, this implies
\begin{align*}
    &\nabla f(X^{k+1}) + A^{k+1} + \rho \Delta Y^{k+1} = 0.
\end{align*}
Hence, we have
\begin{align*}
    X^{k+1} = \argmin_X f(X) + \ip{A^{k+1} + \rho\Delta Y^{k+1}}{X}.
\end{align*}
Now, using above relation with \eqref{eq:grad_f^*_defn} gives $\nabla f^*(-A^{k+1}-\rho \Delta Y^{k+1}) = X^{k+1}$, which is the first part of the lemma. Next, by definition of a primal-dual optimal point, we have
\begin{align*}
    (X^*,Y^*) = \argmin_{(X,Y)} L_0(X,Y,A^*),
\end{align*}
where $L_0$ is the Lagrangian of \eqref{eq:opt_prob_indicator_fn} (equation \eqref{eq:aug_lagrangian} with $\rho = 0$). This implies $X^* = \argmin_X f(X) + \ip{A^*}{X}$. Using this relation with \eqref{eq:grad_f^*_defn}, it follows that $\nabla f^*(-A^*) = X^*$. 
\end{proof}

\section{Proof of each arrow in \eqref{eq:arrows_desired}}
\label{sec:appendix_arrows}

\textbf{Proof of the first arrow ($\tilde{A} + \rho \Delta Y \xrightarrow{\gamma_1} X_\perp$):}
From Proposition \ref{prop:f^*_properties}, we obtain that for all $k \geq 0$,
\begin{align*}
    &\|\nabla f^*(-A^{k+1}-\rho \Delta Y^{k+1}) - \nabla f^*(-A^*)\| \\
    &\leq \frac{1}{\mu} \|A^{k+1} - A^* + \rho \Delta Y^{k+1}\| = \frac{1}{\mu} \|\tilde{A}^{k+1} + \rho \Delta Y^{k+1}\|.
\end{align*}
Using Lemma \ref{lem:grad_f^*_and_x_relation} with the relation above, we have
\begin{align}
    \label{eq:bound_x_tilde_^k+1_square}
    \|X^{k+1} - X^*\|^2 \leq \frac{1}{\mu^2}\|\tilde{A}^{k+1} + \rho \Delta Y^{k+1}\|^2.
\end{align}
Further, $\|X^{k+1} - X^*\|^2 = \|X_\perp^{k+1}\|^2 + \|(\onen) X^{k+1} - X^*\|^2$. Substituting this relation in \eqref{eq:bound_x_tilde_^k+1_square}, we have
\begin{equation}
\label{eq:x_perp_k+1_bound}
    \|X_\perp^{k+1}\| \leq \frac{1}{\mu} \|\tilde{A}^{k+1} + \rho \Delta Y^{k+1}\|.
\end{equation}
Hence,
\begin{equation}
\label{eq:x_perp_lamba_K_bound}
    \|X_\perp\|^{\lambda,K} \leq \frac{1}{\mu} \|\tilde{A} + \rho \Delta Y\|^{\lambda,K}  + \|\tilde{X}^0\|
\end{equation}
for all $K \geq 0$. Thus, $\gamma_1 := 1/\mu$.

\textbf{Proof of the second arrow ($X_\perp \xrightarrow{\gamma_2} Y_\perp$):}
Using the update rule \eqref{eq:zeta_update} of $\zeta^{k}(b)$ and the first property in Lemma \ref{lem:W_properties}, we have, for all $k \geq 1$ and $b \in \{0, \dots, B-1\}$,
\begin{align*}
    &\big(I - p^k(b) \one^T\big)\zeta^k(b+1) \\
    &= \big(I - p^k(b) \one^T\big)W^k(b) \zeta^k(b) \\
    &= \big(W^k(b) - p^k(b) \one^T\big) \zeta^k(b) \\
    &= \big(W^k(b) - p^k(b) \one^T - p^k(b) \one^T + p^k(b) \one^T\big) \zeta^k(b) \\
    &= \big(W^k(b) - p^k(b) \one^T - W^k(b) p^k(b) \one^T \\
    &\hspace{30ex} + p^k(b) \one^T p^k(b) \one^T\big) \zeta^k(b) \\
    &= \big(W^k(b) - p^k(b) \one^T\big) \big(I - p^k(b) \one^T\big) \zeta^k(b).
\end{align*}
Hence,
\begin{align*}
    &\big(I - p^k(b) \one^T + \one \one^T/n - \one \one^T/n\big)\zeta^k(b+1) \\
    &= \big(W^k(b) - p^k(b) \one^T\big) \times \\
    & \quad \quad \big(I - p^k(b) \one^T + \one \one^T/n - \one \one^T/n\big) \zeta^k(b).
\end{align*}
This implies
\begin{align*}
    &\zeta_\perp^k(b+1) + \big(\one/n - p^k(b)\big) \one^T \zeta^k(b+1) \\
    &= \big(W^k(b) - p^k(b) \one^T\big) \Big[\zeta_\perp^k(b) + \big(\one/n - p^k(b)\big) \one^T \zeta^k(b)\Big],
\end{align*}
where $\zeta_\perp^k(b) := (I - \one \one^T/n) \zeta^k(b)$.
This implies
\begin{align*}
    &\|\zeta_\perp^k(b+1) + \big(\one/n - p^k(b)\big) \one^T \zeta^k(b+1)\| \\
    &\leq \|W^k(b) - p^k(b) \one^T\|_2 \|\zeta_\perp^k(b) + \big(\one/n - p^k(b)\big) \one^T \zeta^k(b)\|,
\end{align*}
Since $\zeta^k(0) = X^k$ and $\one^T W^k(b) = \one^T$, we have $\one^T \zeta^k(b+1) = \one^T W^k(b) \zeta^k(b) = \one^T \zeta^k(b) = \one^T X^k$. Further, by the second property in Lemma \ref{lem:W_properties}, $\exists \bar{k} \geq 0$, $\exists \delta \in [0,1)$ such that for all $k \geq \bar{k}$, $\|W^k(b) - p^k(b) \one^T\| \leq \delta$. Hence, for all $k \geq \bar{k}$,
\begin{align*}
    &\|\zeta_\perp^k(b+1) + \big(\one/n - p^k(b)\big) \one^T X^k\| \\
    &\leq \delta \|\zeta_\perp^k(b) + \big(\one/n - p^k(b)\big) \one^T X^k\|,
\end{align*}
Now, by the third property in Lemma \ref{lem:W_properties}, we know that $\lim_{k \rightarrow \infty} \|\one/n - p^k(b)\| = 0$. Since $\|\cdot\|$ is a continuous function, there must exist a $\tilde{k} \geq \bar{k}$ such that for all $k \geq \tilde{k}$, $\|\zeta_\perp^k(b+1)\| \leq \delta \|\zeta_\perp^k(b)\|$. Using this inequality iteratively from $b = 0$ to $b = B-1$, we have $\|\zeta_\perp^k(B)\| \leq \delta \|\zeta_\perp^k(0)\|$ for all $k \geq \tilde{k}$. By definition, $\zeta^k(B) = Y^k$ and $\zeta^k(0) = X^k$. Hence, the last inequality is equivalent to $\|Y_\perp^{k}\| \leq \delta^B \|X_\perp^{k}\|$
for all $k \geq \tilde{k}$. Hence, for all $K \geq 0$,
\begin{align*}
    \|Y_\perp\|^{\lambda,K} \leq \delta^B \|X_\perp\|^{\lambda,K} + c_1
\end{align*}
for some constant $c_1 \geq 0$. Thus, $\gamma_2 := \delta^B$.

\textbf{Proof of the third arrow ($Y_\perp \xrightarrow{\gamma_3} \tilde{A} + \rho \Delta Y$):}
We prove the third arrow in two steps. In the first step, we bound $\Delta Y$ by a linear combination of $Y_\perp$ and $\tilde{A}$. In the second step, we bound $\tilde{A}$ by a linear combination of $Y_\perp$ and $\Delta Y$. To complete the proof, we use the two bounds together.

\emph{Step 1:} Using triangle inequality,
\begin{align*}
    &\|\Delta Y^{k+1}\| \\
    &= \|Y^{k+1} - Y^k\| \\
    &\leq\|Y^{k+1} - (\one \one^T/n) Y^{k+1}\| + \|Y^k - (\one \one^T/n) Y^k\| \\
    & \quad + \|(\one \one^T/n) Y^{k+1} - (\one \one^T/n) Y^k\| \\
    &= \|Y_\perp^{k+1}\| + \|Y_\perp^{k}\| + \|(\one \one^T/n) Y^{k+1} - (\one \one^T/n) Y^k\|.
\end{align*}
We know that $\one^T Y^k = \one^T X^k$ for all $k \geq 0$. Hence,
\begin{align*}
    &\|\Delta Y^{k+1}\| \\
    &\leq \|Y_\perp^{k+1}\| + \|Y_\perp^{k}\| + \|(\one \one^T/n) X^{k+1} - (\one \one^T/n) X^k\| \\
    &\leq \|Y_\perp^{k+1}\| + \|Y_\perp^{k}\| + \|X_\perp^{k+1}\| + \|X_\perp^{k}\| + \|X^{k+1} - X^k\|.
\end{align*}
From \eqref{eq:x_perp_k+1_bound}, we know that $\|X_\perp^{k+1}\| \leq (1/\mu) \|\tilde{A}^{k+1} + \rho \Delta Y^{k+1}\|$ for all $k \geq 0$. Using this with Lemma \ref{lem:grad_f^*_and_x_relation} and Proposition \ref{prop:f^*_properties}, we have
\begin{align*}
    &\|X^{k+1} - X^k\| \\
    &\leq \|X^{k+1} - X^*\| + \|X^k - X^*\| \\
    &= \|\nabla f^*(-A^{k+1}-\rho \Delta Y^{k+1}) - \nabla f^*(-A^*)\| + \\
    & \quad \|\nabla f^*(-A^k-\rho \Delta Y^k) - \nabla f^*(-A^*)\| \\
    &\leq \frac{1}{\mu} \big(\|A^{k+1} - A^* + \rho \Delta Y^{k+1}\| + \|A^k - A^* + \rho \Delta Y^k\|\big) \\
    &\leq \frac{1}{\mu} \big(\|\tilde{A}^{k+1}\| + \|\tilde{A}^k\| + \rho(\|\Delta Y^{k+1}\| + \|\Delta Y^k\|)\big).
\end{align*}
Hence, 
\begin{align*}
    &\|\Delta Y^{k+1}\| \leq \|Y_\perp^{k+1}\| + \|Y_\perp^{k}\| + \\
    &\quad \frac{2}{\mu} (\|\tilde{A}^{k+1}\| + \|\tilde{A}^{k}\| + \rho(\|\Delta Y^{k+1}\| + \|\Delta Y^k\|)).
\end{align*}
Dividing by $\lambda^{k+1}$ on both sides, we have
\begin{align*}
    &\frac{\|\Delta Y^{k+1}\|}{\lambda^{k+1}} \leq \frac{\|Y_\perp^{k+1}\|}{\lambda^{k+1}} + \frac{1}{\lambda}\frac{\|Y_\perp^{k}\|}{\lambda^k} \\
    & \quad+ \frac{2}{\mu} \Bigg(\frac{\|\tilde{A}^{k+1}\|}{\lambda^{k+1}} + \frac{1}{\lambda}\frac{\|\tilde{A}^{k}\|}{\lambda^{k}} + \rho\bigg(\frac{\|\Delta Y^{k+1}\|}{\lambda^{k+1}} + \frac{1}{\lambda}\frac{\|\Delta Y^k\|}{\lambda^k}\bigg)\Bigg)
\end{align*}
for all $k \geq 0$.
Taking supremum over $k = 0, 1, \dots, K-1$, we have
\begin{align*}
    &\|\Delta Y\|^{\lambda,K} \leq \left(1 + \frac{1}{\lambda}\right) \|Y_\perp\|^{\lambda,K} \\
    &\quad + \frac{2}{\mu} \Bigg(\left(1 + \frac{1}{\lambda}\right) (\|\tilde{A}\|^{\lambda,K} + \rho \|\Delta Y\|^{\lambda,K})\Bigg) + c_2
\end{align*}
for all $K \geq 0$ for some constant $c_2 \geq 0$. Rearranging the terms, we have
\begin{align*}
    &\left(\frac{\lambda \mu - 2 \rho (1 + \lambda)}{\lambda \mu}\right)\|\Delta Y\|^{\lambda,K} \\
    &\leq \left(\frac{1+\lambda}{\lambda}\right) \|Y_\perp\|^{\lambda,K} + \frac{2}{\mu} \left(\frac{1 + \lambda}{\lambda}\right)\|\tilde{A}\|^{\lambda,K} + c_2.
\end{align*}
From \eqref{eq:rho_condition_1}, we know that $\lambda \mu - 2 \rho (1 + \lambda) > 0$. Hence,
\begin{align}
\label{eq:delta_y_bound}
    &\|\Delta Y\|^{\lambda,K} \leq \left(\frac{\mu(1+\lambda)}{\lambda \mu - 2 \rho (1 + \lambda)}\right) \|Y_\perp\|^{\lambda,K} \nonumber \\
    & \quad + \frac{2}{\mu} \left(\frac{\mu(1 + \lambda)}{\lambda \mu - 2 \rho (1 + \lambda)}\right)\|\tilde{A}\|^{\lambda,K} + c_2.
\end{align}

\emph{Step 2:} Let $\Delta A^{k+1} = A^{k+1} - A^k$. For all $k \geq 0$,
\begin{align*}
    &\|\tilde{A}^k\|^2 = \|A^k - A^*\|^2 \\
    &= \|A^k - A^{k+1} + A^{k+1} - A^*\|^2 \\
    &= \|\Delta A^{k+1}\|^2 + \|\tilde{A}^{k+1}\|^2 + 2\ip{A^k - A^{k+1}}{A^{k+1} - A^*} \\
    &\geq \|\tilde{A}^{k+1}\|^2 + 2\ip{A^k - A^{k+1}}{A^{k+1} - A^*}.
\end{align*}
This implies
\begin{align}
    \label{eq:bound_a_tilde^k+1}
    \|\tilde{A}^{k+1}\|^2 \leq &\|\tilde{A}^k\|^2 + 2\ip{A^{k+1} - A^k}{A^{k+1} - A^*}.
\end{align}
We try to bound the term $2\ip{A^{k+1} - A^k}{A^{k+1} - A^*}$ above. The $A$ update step in Algorithm \ref{alg:main} can be written as
\begin{align*}
    A^{k+1} &= \argmin_{A:(\onen)A = 0} \rho \ip{Y^{k+1} - X^{k+1}}{A} + \frac{1}{2}\|A - A^k\|^2 \\
    &= \argmin_{A:(\onen)A = 0} \rho\ip{Y_\perp^{k+1} - X^{k+1}}{A} + \frac{1}{2}\|A - A^k\|^2,
\end{align*}
From Lemma \ref{lem:dual_opt_point}, we know that $(\onen)A^* = 0$. Evaluating the optimality condition for the minimization problem above at $A = A^*$, we obtain
\begin{align*}
    \ip{\rho(Y_\perp^{k+1} - X^{k+1}) + A^{k+1} - A^k}{A^* - A^{k+1}} \geq 0.
\end{align*}
Using Lemma \ref{lem:grad_f^*_and_x_relation}, this implies
\begin{align}
    \label{eq:bound_(A^k+1-A^k)^T(A^k+1-A^*)}
    &\ip{A^{k+1} - A^k}{A^{k+1} - A^*} \nonumber \\
    &\leq \rho\ip{\nabla f^*(-A^{k+1}-\rho \Delta Y^{k+1}) - Y_\perp^{k+1}}{A^{k+1} - A^*}
\end{align}
For the ease of notation, let $\hat{A} = -A^{k+1}-\rho \Delta Y^{k+1}$. Now, we bound the term $\rho\ip{\nabla f^*(\hat{A}) - Y_\perp^{k+1}}{A^{k+1} - A^*}$. Since $f^*$ is $1/L$-strongly convex, we have, for all $A_1$ in the domain of $f^*$, $f^*(A_1) \geq f^*(\hat{A}) + \ip{\nabla f^*(\hat{A})}{A_1 - \hat{A}} + (1/2L) \|A_1 - \hat{A}\|^2$. This implies
\begin{align*}
    &\ip{\nabla f^*(\hat{A}) - Y_\perp^{k+1}}{A_1 - \hat{A}} \\ &\leq f^*(A_1) - f^*(\hat{A}) - \ip{Y_\perp^{k+1}} {A_1 - \hat{A}} - \frac{\|A_1 - \hat{A}\|^2}{2L} \\
    &\leq f^*(A_1) - f^*(\hat{A}) + \frac{\alpha \|Y_\perp^{k+1}\|^2}{2} \\
    & \hspace{3cm} + \frac{\|A_1 - \hat{A}\|^2}{2\alpha} - \frac{\|A_1 - \hat{A}\|^2}{2L},
\end{align*}
where the last inequality follows by using Peter-Paul inequality on $-\ip{Y_\perp^{k+1}}{A_1 - \hat{A}}$, where $\alpha > 0$ is arbitrary. Similarly, using the fact that $f^*$ is $1/\mu$-smooth, we have, for all $A_2$ in the domain of $f^*$,
\begin{align*}
    &\ip{\nabla f^*(\hat{A}) - Y_\perp^{k+1}}{\hat{A} - A_2} \\
    &\leq f^*(\hat{A}) - f^*(A_2) + \frac{\beta \|Y_\perp^{k+1}\|^2}{2} \\
    &\hspace{3cm} + \frac{\|A_2 - \hat{A}\|^2}{2\beta} + \frac{\|A_2 - \hat{A}\|^2}{2\mu},
\end{align*}
where $\beta > 0$ is arbitrary. Adding the two inequalities above with $A_1 = -A^*, A_2 = -A^{k+1}$, we have
\begin{align*}
    &\ip{\nabla f^*(\hat{A}) - Y_\perp^{k+1}}{A^{k+1} - A^*} \\
    &\leq f^*(-A^*) - f^*(-A^{k+1}) + \frac{\alpha+\beta}{2} \|Y_\perp^{k+1}\|^2 \\ 
    & \quad + \Big(\frac{1}{2\alpha} - \frac{1}{2L}\Big) + \|\tilde{A}^{k+1} + \rho \Delta Y^{k+1}\|^2 \\
    &\quad + \Big(\frac{1}{2\beta} + \frac{1}{2\mu}\Big)\|\rho \Delta Y^{k+1}\|^2.
\end{align*}
Substituting this in \eqref{eq:bound_(A^k+1-A^k)^T(A^k+1-A^*)}, and then substituting \eqref{eq:bound_(A^k+1-A^k)^T(A^k+1-A^*)} in \eqref{eq:bound_a_tilde^k+1}, we obtain
\begin{align}
    \label{eq:bound_a_tilde^k+1_second}
    &\|\tilde{A}^{k+1}\|^2 \nonumber \\
    &\leq \|\tilde{A}^k\|^2 + 2\rho(f^*(-A^*) - f^*(-A^{k+1})) + \rho(\alpha+\beta)\|Y_\perp^{k+1}\|^2 \nonumber \\
    &+ \rho\Big(\frac{1}{\alpha} - \frac{1}{L}\Big)\|\tilde{A}^{k+1} + \rho \Delta Y^{k+1}\|^2 + \rho\Big(\frac{1}{\beta} + \frac{1}{\mu}\Big)\|\rho \Delta Y^{k+1}\|^2.
\end{align}
Choose $\alpha = L$. Further, since $f^*$ is $1/L$-strongly convex,
\begin{align*}
    f^*(-A^{k+1}) &\geq f^*(-A^*) + \ip{\nabla f^*(-A^*)} {-A^{k+1} + A^*} \\
    &\quad + \frac{\|A^{k+1} - A^*\|^2}{2L}. 
\end{align*}
where ${\nabla f^*(-A^*)}{-A^{k+1} + A^*} = 0$, since $\nabla f^*(-A^*) = X^* = (\onen)X^*$ and $(\onen)A^{k+1} = (\onen)A^* = 0$. Thus,
\begin{align*}
    f^*(-A^*) - f^*(-A^{k+1}) &\leq -\frac{1}{2L}\|\tilde{A}^{k+1}\|^2.
\end{align*}
Substituting the inequality above in \eqref{eq:bound_a_tilde^k+1_second}, dividing by $(\lambda^{k+1})^2$ on both sides and taking supremum over $k = 0, \dots, K-1$, we obtain
\begin{align*}
    &\Big(1 + \frac{\rho}{L} - \frac{1}{\lambda^2}\Big)(\|\tilde{A}\|^{\lambda,K})^2 \\
    &\leq \rho(L+\beta)(\|Y_\perp\|^{\lambda,K})^2 + \rho^3\Big(\frac{1}{\beta} + \frac{1}{\mu}\Big)(\|\Delta Y\|^{\lambda,K})^2 + c_3
\end{align*}
for all $K \geq 0$, for some constant $c_3 \geq 0$. From \eqref{eq:rho_condition_1}, we know that $1 + \rho/L - 1/\lambda^2 > 0$. Hence, using Lemma \ref{lem:square_root_sequences_inequality}, we have
\begin{align}
\label{eq:a_tilde_bound}
    \sqrt{1 + \frac{\rho}{L} - \frac{1}{\lambda^2}}\|\tilde{A}\|^{\lambda,K} &\leq \sqrt{\rho(L+\beta)}\|Y_\perp\|^{\lambda,K} \nonumber \\
    &\quad + \sqrt{\rho^3\Big(\frac{1}{\beta} + \frac{1}{\mu}\Big)}\|\Delta Y\|^{\lambda,K} + \sqrt{c_3}
\end{align}
for all $K \geq 0$. 
Substituting \eqref{eq:delta_y_bound} in \eqref{eq:a_tilde_bound}, we obtain that
\begin{align*}
    &\sqrt{1 + \frac{\rho}{L} - \frac{1}{\lambda^2}}\|\tilde{A}\|^{\lambda,K} \\
    &\leq \left(\sqrt{\rho(L+\beta)} + \sqrt{\rho^3\Big(\frac{1}{\beta} + \frac{1}{\mu}\Big) } \left(\frac{\mu(1 + \lambda)}{\lambda \mu - 2 \rho (1 + \lambda)}\right) \right)\times \\
    & \|Y_\perp\|^{\lambda,K} + \sqrt{\rho^3\Big(\frac{1}{\beta} + \frac{1}{\mu}\Big)} \left(\frac{2(1 + \lambda)}{\lambda \mu - 2 \rho (1 + \lambda)}\right)\|\tilde{A}\|^{\lambda,K} + c_4
\end{align*}
for all $K \geq 0$, for some $c_4 \geq 0$. Rearranging the terms, we obtain that
\begin{align*}
    \bar{c} \|\tilde{A}\|^{\lambda,K} &\leq \Bigg(\sqrt{\rho(L+\beta)} + \sqrt{\rho^3\Big(\frac{1}{\beta} + \frac{1}{\mu}\Big) } \times \\
    & \quad \Big(\frac{\mu(1 + \lambda)}{\lambda \mu - 2 \rho (1 + \lambda)}\Big) \Bigg)\|Y_\perp\|^{\lambda,K} + c_4
\end{align*}
where 
\begin{align*}
    \bar{c} := \sqrt{1 + \frac{\rho}{L} - \frac{1}{\lambda^2}} - \sqrt{\rho^3\Big(\frac{1}{\beta} + \frac{1}{\mu}\Big)} \left(\frac{2(1 + \lambda)}{\lambda \mu - 2 \rho (1 + \lambda)}\right).
\end{align*}
It can be verified that given \eqref{eq:rho_condition_1}, since 
\begin{align*}
    \rho < \frac{\lambda^2\mu^2}{4(1 + \lambda)\left(\lambda\mu + L(1+\lambda)\left(\frac{1}{\beta} + \frac{1}{\mu}\right)\right)},
\end{align*}
we have $\bar{c} > 0$. Hence,
\begin{align}
\label{eq:a_tilde_bound_final}
    \|\tilde{A}\|^{\lambda,K} \leq c_5 \|Y_\perp\|^{\lambda,K} + c_6
\end{align}
for all $K \geq 0$, for some $c_6 \geq 0$ and
\begin{align*}
    c_5 &:= \frac{1}{\bar{c}}\left(\sqrt{\rho(L+\beta)} + \sqrt{\rho^3\Big(\frac{1}{\beta} + \frac{1}{\mu}\Big) } \left(\frac{\mu(1 + \lambda)}{\lambda \mu - 2 \rho (1 + \lambda)}\right) \right) \\
    & > 0.
\end{align*}
Substituting the inequality above in \eqref{eq:delta_y_bound}, we obtain
\begin{align*}
    \|\Delta Y\|^{\lambda,K} &\leq \left(\frac{\mu(1+\lambda)}{\lambda \mu - 2 \rho (1 + \lambda)} + c_5 \frac{2}{\mu} \left(\frac{\mu(1 + \lambda)}{\lambda \mu - 2 \rho (1 + \lambda)}\right)\right) \times \\
    & \quad \|Y_\perp\|^{\lambda,K} + c_7
\end{align*}
for all $K \geq 0$, for some $c_7 \geq 0$. Thus, for all $K \geq 0$,
\begin{align*}
    \|\tilde{A} + \rho \Delta Y\|^{\lambda,K} \leq \|\tilde{A}\|^{\lambda,K} + \rho \|\Delta Y\|^{\lambda,K} \leq \gamma_3 \|Y_\perp\|^{\lambda,K} + c_8
\end{align*}
for some $c_8 \geq 0$ and
\begin{align}
\label{eq:gamma_3_appendix}
    \gamma_3 &:= c_5 + \rho\left(\frac{\mu(1+\lambda)}{\lambda \mu - 2 \rho (1 + \lambda)} + c_5 \frac{2}{\mu} \left(\frac{\mu(1 + \lambda)}{\lambda \mu - 2 \rho (1 + \lambda)}\right)\right) \nonumber \\
    &= c_5 + \rho \left(1 + \frac{2c_5}{\mu}\right) \left(\frac{\mu(1+\lambda)}{\lambda \mu - 2 \rho (1 + \lambda)}\right).
\end{align}

\section{Choice of $\lambda$, $\rho$ and $B$ which ensures $\gamma_1 \gamma_2 \gamma_3 < 1$}
\label{sec:appendix_choice_of_B}

Recall that $\gamma_1 := 1/\mu$, $\gamma_2 := \delta^B$ and $\gamma_3$ is as defined in \eqref{eq:gamma_3_appendix}. First, we choose an arbitrary $\beta > 0$ and a desired $\lambda \in (0,1)$ satisfying \eqref{eq:lambda_condition_1}. Then, we choose a $\rho$ satisfying \eqref{eq:rho_condition_1}. Note that $\gamma_2 = \delta^B$ where $\delta < 1$, while $\gamma_1$ and $\gamma_3$ are independent of $B$. Hence, choosing $B \geq \max\{1,\lceil\log(\gamma_1\gamma_3)/\log(1/\delta)\rceil\}$ ensures $\gamma_1 \gamma_2 \gamma_3 < 1$.

\section{Proof of the additional inequalities in \eqref{eq:additional_inequalities}}
\label{sec:appendix_add_ineq}

From \eqref{eq:a_tilde_bound_final}, we have $\|\tilde{A}\|^\lambda \leq c_5 \|Y_\perp\|^\lambda + c_6$, and from \eqref{eq:x_perp_lamba_K_bound}, we have $\|\tilde{X}\|^\lambda \leq (1/\mu) \|\tilde{A} + \rho \Delta Y\|^\lambda + \|\tilde{X}^0\|$.


\end{document}